\title{Weighted cone-volume measures of pseudo-cones}
\author{Rolf Schneider}
\date{}
\newcommand{\Sn}{{\mathbb S}^{n-1}}
\newcommand{\R}{{\mathbb R}}
\newcommand{\C}{{\mathcal C}}
\newcommand{\K}{{\mathcal K}}
\newcommand{\Rn}{{\mathbb R}^n}
\newcommand{\N}{{\mathbb N}}
\newcommand{\Ha}{\mathcal{H}}
\newcommand{\B}{\mathcal{B}}
\newcommand{\D}{{\rm d}}
  \renewcommand{\exp}{{\rm exp}\,}
  \newcommand{\fed}{\,\rule{.1mm}{.20cm}\rule{.20cm}{.1mm}\,}
\newtheorem{theorem}{Theorem}
\newtheorem{lemma}{Lemma}
\newtheorem{definition}{Definition}
\begin{document}
\maketitle

\begin{abstract}
A pseudo-cone in $\R^n$ is a nonempty closed convex set $K$ not containing the origin and such that $\lambda K \subseteq K$ for all $\lambda\ge 1$. It is called a $C$-pseudo-cone if $C$ is its recession cone, where $C$ is a pointed closed convex cone with interior points. The cone-volume measure of a pseudo-cone can be defined similarly as for convex bodies, but it may be infinite. After proving a necessary condition for cone-volume measures of $C$-pseudo-cones, we introduce suitable weights for cone-volume measures, yielding finite measures. Then we provide a necessary and sufficient condition for a Borel measure on the unit sphere to be the weighted cone-volume measure of some $C$-pseudo-cone.\\[2mm]
{\em Keywords: pseudo-cone, surface area measure, cone-volume measure, weighting, Minkowski type problem}  \\[1mm]
2020 Mathematics Subject Classification: Primary 52A20, Secondary 52A38
\end{abstract}

\section{Introduction}\label{sec1}

The classical theory of convex bodies deals mainly with properties that are invariant under translations of the bodies. This changed with the seminal papers \cite{Lut75, Lut93, HLYZ16} on $L_p$ and dual Brunn--Minkowski theories, where the family $\K^n_{(o)}$ of convex bodies in $\R^n$ containing the origin $o$ in the interior is in the foreground. These theories increased considerably the challenge of various Minkowski type problems. Many questions on the existence of convex bodies with given measures of various types have been solved, and several others are open. Let us briefly recall the classical issue.

For a convex body $K$ (a compact, convex set with interior points) in Euclidean space $\R^n$, the surface area measure $S_{n-1}(K,\cdot)$ is defined as the image measure of $\Ha^{n-1}$ (where $\Ha^k$ denotes the $k$-dimensional Hausdorff measure) under the Gauss map of $K$. Thus, it is a finite Borel measure on the unit sphere $\Sn$ of $\R^n$. Minkowski's existence theorem provides necessary and sufficient conditions for a Borel measure on $\Sn$ to be the surface area measure of a convex body. If such a body exists, it is unique up to a translation. For references and further information, we refer to \cite[Sect. 8.2]{Sch14} and its Notes. 

The cone-volume measure $V_K$ of a convex body $K\in\K^n_{(o)}$ is also a Borel measure on $\Sn$. It can be defined as follows. Let $\nu_K(x)$ be the outer unit normal vector of $K$ at points $x\in \partial K$ where it is unique (which is the case $\Ha^{n-1}$-almost everywhere on $\partial K$, the boundary of $K$). Then
\begin{equation}\label{1.1}  
V_K(\omega)=\frac{1}{n}\int_{\nu_K^{-1}(\omega)} \langle x,\nu_K(x)\rangle\,\Ha^{n-1}(\D x)= \frac{1}{n} \int_\omega h_K(u)\,\D S_{n-1}(K,u)
\end{equation}
for Borel sets $\omega\subset\Sn$, where $\langle\cdot\,,\cdot\rangle$ denotes the scalar product of $\R^n$ and  $h_K$ is the support function of $K$. The name `cone-volume measure' comes from the fact that 
\begin{equation}\label{1.2} 
V_K(\omega)= \Ha^n\left(\bigcup_{x\in \nu_K^{-1}(\omega)} [o,x]\right),
\end{equation}
where $[o,x]$ is the closed segment with endpoints $o$ and $x$. For the equivalence of these representations, we refer to \cite[p. 501]{Sch14}.

The Minkowski type problem for cone-volume measures asks for necessary and sufficient conditions for a Borel measure on $\Sn$ to be the cone-volume measure of some convex body. It is also known as the $L_0$-Minkowski problem or the logarithmic Minkowski problem. In the case of existence, also uniqueness is of interest. Early contributions to the two-dimensional case are due to Gage and Li \cite{GL94} and to  Stancu \cite{Sta02, Sta03}. In $n$ dimensions, B\"or\"oczky, Lutwak, Yang and Zhang \cite{BLYZ13} discovered that for even measures the so-called `subspace concentration condition' is necessary and sufficient. Treatments of the discrete case were provided by Zhu \cite{Zhu14} and by B\"or\"oczky, Heged\H{u}s and Zhu \cite{BHZ16}. That the subspace concentration condition is sufficient also for non-symmetric measures was proved by Chen, Li and Zhu \cite{CLZ19}. Further investigations on cone-volume measures are \cite{HL14, BH15, BH16b, BH17}.

In the last decade, it has become clear that a counterpart to the set $\K^n_{(o)}$ of convex bodies containing the origin in the interior can be seen in the set of pseudo-cones. A pseudo-cone in $\R^n$ is a nonempty closed convex set $K$ not containing the origin and satisfying $\lambda K\subseteq K$ for $\lambda\ge 1$. Thus, pseudo-cones are special unbounded convex sets. If $C$ is the recession cone of the pseudo-cone $K$ (i.e., $C=\{z\in\R^n: K+\lambda z\subseteq K\,\forall \lambda\ge 0\}$), we have $K\subset C$. In the following, we assume always that a pointed closed convex cone $C\subset\R^n$ with interior points is given, and we will be interested in the pseudo-cones with recession cone $C$, called $C$-pseudo-cones. The set of all $C$-pseudo-cones in $\R^n$ is denoted by $ps(C)$. 

A pseudo-cone $K\in ps(C)$ is called $C$-full if $C\setminus K$ is bounded, and $C$-close if $C\setminus K$ has finite volume (called the covolume of $K$). Khovanski\u{\i} and Timorin \cite{KT14} introduced $C$-full sets (under a different name) and proved complemented versions of the Aleksandrov--Fenchel, Brunn--Minkowski and Minkowski inequalities for them. A Brunn--Minkowski theory for $C$-close pseudo-cones was developed in \cite{Sch18}. The similarity of $ps(C)$ to $\K^n_{(o)}$ became even more evident when the existence of a counterpart to polarity, called copolarity, was noticed. In a special case it was introduced by Rashkovskii \cite{Ras17}. It appeared within a general study of dualities by Artstein--Avidan, Sadovsky and Wyczesany \cite{ASW22} and was further studied by Xu, Li and Leng \cite{XLL22} and by Schneider \cite{Sch24a}.

Surface area measures and cone-volume measures of pseudo-cones can essentially be defined as for convex bodies. We denote by $C^\circ=\{x\in\R^n: \langle x,y\rangle\le 0\,\forall y\in C\}$ the dual cone of $C$ and need the open subsets of the unit sphere $\Sn$ given by
$$ \Omega_C:= \Sn\cap {\rm int}\,C,\qquad \Omega_{C^\circ}:= \Sn\cap{\rm int}\,C^\circ.$$
Here ${\rm int}$ denotes the interior. The surface area measure of $K\in ps(C)$ is defined by 
$$ S_{n-1}(K,\omega) := \Ha^{n-1}(\nu_K^{-1}(\omega))\quad \mbox{for } \omega\in\B(\Omega_{C^\circ}),$$
where $\B(X)$ denotes the $\sigma$-algebra of Borel sets of a topological space $X$. The cone-volume measure $V_K$ of $K$ is defined by (\ref{1.2}), but now only for Borel sets $\omega\subset\Omega_{C^\circ}$.

The mappings $S_{n-1}(K,\cdot)$ and $V_K$ are again Borel measures, but in contrast to the case of convex bodies they are only defined on $\Omega_{C^\circ}$, and they can be infinite, though they are finite on compact sets. Each of these measures can also be the zero measure, namely if $K=F+C$ with a nonempty compact convex set $F\subset C$ of dimension less than $n-1$. 
 
The Minkowski problem for pseudo-cones asks for necessary and sufficient conditions for a Borel measure $\varphi$ on $\Omega_{C^\circ}$ in order that there is a pseudo-cone $K\in ps(C)$ with $S_{n-1}(K,\cdot)=\varphi$. A general answer is unknown. That finiteness and compact support are sufficient was proved in \cite{Sch18}. Extensions of this result to $L_p$ versions were treated by Yang, Ye and Zhu \cite{YYZ22}, and versions for dual curvature measures by Li, Ye and Zhu \cite{LYZ23}. A further extension, to so-called $(p,q)$-dual curvature measures, is due to Ai, Yang and Ye \cite{AYZ24}.

It has repeatedly been pointed out to the author, so by Jacopo Ulivelli, Vadim Semenov and  Yiming Zhao, that the results from \cite{Sch18, Sch21} for finite measures can also be deduced from known results about Monge--Amp\`{e}re equations. A certain transformation is needed here. For this transformation we refer, for example, to Bakelman \cite[Sect. 4]{Bak83}, where also pseudo-cones appear implicitly, though not under this name. For weightings as treated in the following, a direct geometric approach seems to be more natural.

The Minkowski problem for cone-volume measures of pseudo-cones can be formulated similarly. It was proved in \cite[Thm. 5]{Sch18} that every nonzero finite Borel measure on $\Omega_{C^\circ}$ is the cone-volume measure of some $C$-pseudo-cone $K$ (but finiteness is not a necessary condition). That $K$ is uniquely determined was proved by Yang, Ye and Zhu \cite[Thm. 7.3]{YYZ22}. The fact that no analogue of the subspace concentration condition is required, makes the Minkowski problem for cone-volume measures of pseudo-cones easier than for convex bodies; on the other hand, infinite measures cause new problems.

When the finiteness condition is dropped, the mentioned Minkowski problems are open. For surface area measures, a necessary condition, in the form of a moderate growth condition close to the boundary of $\Omega_{C^\circ}$, was proved in \cite {Sch21}. A similar necessary condition for cone-volume measures is the subject of the following theorem. For sets $\omega\in\B(\Omega_{C^\circ})$ we denote by $\Delta(\omega)$ the spherical distance of $\omega$ from the boundary of $\Omega_{C^\circ}$, that is
$$ \Delta(\omega):= \inf\{\angle(u,v): u\in\omega,\,v\in\partial \Omega_{C^\circ}\}$$
(it is clear from the context whether the boundary operator $\partial$ (and later ${\rm int}$) refers to $\Rn$ or to $\Sn$). Here $\angle(u,v)$ denotes the angle between the vectors $u$ and $v$. According to \cite[Sect. 2]{Sch24a}), we can fix a unit vector $\mathfrak v$ with $\mathfrak v\in{\rm  int}\,C$ and $-\mathfrak v\in{\rm int}\,C^\circ$.

\begin{theorem}\label{T1.1n}
Let $K\in ps(C)$. There is a constant $c$, depending only on $C$, $\mathfrak v$ and $K$, such that
$$ V_K(\omega) \le \frac{c}{\Delta(\omega)^{n-1}}$$
for each compact set $\omega\subset\Omega_{C^\circ}$.
\end{theorem}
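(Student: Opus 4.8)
The plan is to deduce the estimate from the corresponding growth condition for the \emph{surface area} measure. The first step is the pseudo-cone analogue of the representation (\ref{1.1}). Decomposing the set $\bigcup_{x\in\nu_K^{-1}(\omega)}[o,x]$, whose volume is $V_K(\omega)$, into the pyramids with apex $o$ spanned by the surface elements of $\bd K$, one obtains
$$ V_K(\omega)=\frac1n\int_\omega\bigl(-h_K(u)\bigr)\,\D S_{n-1}(K,u),\qquad \omega\in\B(\Omega_{C^\circ}). $$
Here $-h_K(u)\ge 0$ for $u\in\Omega_{C^\circ}$, since $K\subset C$ and $u\in\inn C^\circ$ force $\langle x,u\rangle\le 0$ for all $x\in K$, hence $h_K(u)\le 0$. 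Geometrically, $-h_K(u)$ is the distance from $o$ to the supporting hyperplane of $K$ with outer normal $u$, which is precisely the height of the corresponding pyramid; the displayed formula is then the additivity of $\Ha^n$ under this decomposition, exactly as in the equivalence of (\ref{1.1}) and (\ref{1.2}) for convex bodies. Since $\omega$ is compact, both sides are finite.

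The key observation is that the factor $-h_K$ is bounded on $\Omega_{C^\circ}$ by a constant depending only on $K$. Indeed, fix any point $x_0\in K$. For every $u\in\Sn$ with $h_K(u)\le 0$,
$$ -h_K(u)=\inf_{x\in K}\langle x,-u\rangle\le\langle x_0,-u\rangle\le|x_0| $$
by the Cauchy--Schwarz inequality; equivalently, the supporting hyperplane with normal $u$ separates $o$ from $x_0$, so its distance from $o$ cannot exceed $|x_0|$. Taking the infimum over $x_0\in K$ shows that $P:=\sup_{u\in\Omega_{C^\circ}}(-h_K(u))$ does not exceed the distance of $o$ from $K$, and in particular is finite. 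Inserting this into the representation above gives, for every compact $\omega\subset\Omega_{C^\circ}$,
$$ V_K(\omega)\le\frac Pn\,S_{n-1}(K,\omega). $$

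It remains to bound the surface area measure, and this is exactly the moderate growth condition established in \cite{Sch21}: there is a constant $c_1$, depending only on $C$, $\mathfrak v$ and $K$, with $S_{n-1}(K,\omega)\le c_1\,\Delta(\omega)^{-(n-1)}$ for compact $\omega\subset\Omega_{C^\circ}$. Combining this with the previous inequality yields the assertion with $c=Pc_1/n$.

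I expect the whole difficulty to reside in this surface area estimate, which is where the exponent $n-1$ originates; relative to it, the only ingredient genuinely special to the cone-volume measure is the uniform bound on $-h_K$, which is what lets one pass from surface area to cone volume. The mechanism behind the exponent is that normals $u\in\omega$ enclose with every recession direction $\theta\in C\cap\Sn$ an angle of at least $\tfrac\pi2+\Delta(\omega)$, so that $-\langle\theta,u\rangle\ge\sin\Delta(\omega)$; together with the bound on $-h_K$ this confines the points $x\in\nu_K^{-1}(\omega)$ to $|x|\le P/\sin\Delta(\omega)$, whence the relevant part of $\bd K$ projects, along $\mathfrak v$, into an $(n-1)$-dimensional region of diameter at most of order $\Delta(\omega)^{-1}$, giving the power $n-1$. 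Should one prefer a self-contained treatment, this projection argument can be carried out directly here in place of the reference to \cite{Sch21}.
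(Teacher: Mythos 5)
Your proposal is correct, and it is precisely the alternative route that the paper acknowledges and deliberately declines: Section \ref{sec4} opens with the remark that Theorem \ref{T1.1n} ``could be derived from a corresponding result for surface area measures, but we prefer to deduce it directly'' from Lemma \ref{L3.1}. Your three ingredients all check out. The representation $V_K(\omega)=\frac1n\int_\omega \overline h_K\,\D S_{n-1}(K,\cdot)$ is the unweighted ($\Theta\equiv 1$, $q=0$) analogue of the paper's Lemma \ref{L3.5}; your pyramid decomposition is the right heuristic, but in a writeup you should either cite \cite{Sch18} or run the polar-coordinate/radial-map computation as in the proof of Lemma \ref{L3.5}, rather than only gesture at the convex-body case. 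The bound $\overline h_K(u)\le \|x_0\|$ for any $x_0\in K$, hence $P\le b(K)<\infty$, is correct and matches the paper's observation that $h_K$ is bounded. And the growth estimate from \cite{Sch21} does have the exponent $n-1$ with a constant depending only on $C$, $\mathfrak v$ and $K$; if you want to avoid leaning on the external statement, it follows in one line from Lemma \ref{L3.1}: $\nu_K^{-1}(\omega)\subset \partial K\cap C^-(t_\omega)$, and by monotonicity of surface area for convex sets, $\Ha^{n-1}(\partial(K\cap C^-(t_\omega)))\le \Ha^{n-1}(\partial C^-(t_\omega))=O(t_\omega^{n-1})$. The paper instead argues volumetrically: it confines $M(K,\omega)$ to $C^-(t_\omega)$ and then subtracts the translated cone $C_z=C+z\subseteq K$, so that the slice areas $\Ha^{n-1}(H(\mathfrak v,\tau)\cap C)-\Ha^{n-1}(H(\mathfrak v,\tau)\cap C_z)=O(\tau^{n-2})$ cancel to first order and $\Ha^n(C^-(t_\omega)\setminus C_z)=O(t_\omega^{n-1})$ --- that cancellation is where the paper gains the exponent $n-1$ over the naive $t_\omega^n$, whereas in your argument the same saving is effected by the uniform bound on the pyramid heights $\overline h_K$, with the exponent imported from the surface-area bound. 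What your route buys is brevity and a clean conceptual reduction (cone-volume growth $=$ surface-area growth $\times$ bounded heights); what the paper's direct route buys is self-containedness --- it needs only Lemma \ref{L3.1}, which it reproves in simplified form, and not the precise statement from \cite{Sch21}. Your closing self-contained sketch via projection along $\mathfrak v$ would also work, but note that making it rigorous requires, e.g., that each line $x+\R\mathfrak v$ meets $K$ in a ray and hence $\partial K$ in at most one point; the monotonicity argument above is cleaner.
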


In general, necessary and sufficient conditions are unknown. However, and this leads us to the core of this investigation, we can look at surface area and cone-volume measures of $C$-pseudo-cones from a different point of view. For $K\in ps(C)$, the shape of $K$ is strongly influenced by the shape of $C$ if the distance from the origin tends to infinity. For this reason, we introduce weightings that become smaller where the measures under consideration are of lesser importance. For convex bodies, weighted Minkowski problems were considered, for example, by Livshyts \cite{Liv19} and by Kryvonos and Langharst \cite{KL23}. It seems that for pseudo-cones, suitable weightings are even more desirable, allowing us to neglect those regions where the measures loose interest. For $C$-pseudo-cones, weighted surface area measures were treated in \cite{Sch24c}. Underlying there was the following definition.

\begin{definition}\label{D1.1} 
Let $\Theta: C\setminus\{o\}\to(0,\infty)$ be continuous and homogeneous of degree $-q$, where $n-1<q<n$.
\end{definition}

The $\Theta$-weighted surface area measure of $K\in ps(C)$ is defined by 
$$ S_{n-1}^\Theta(K,\omega):= \int_{\nu_K^{-1}(\omega)} \Theta(y)\,\Ha^{n-1}(\D y)$$
for $\omega\in\B(\Omega_{C^\circ})$. The following was proved in \cite{Sch24c}. The measure $S_{n-1}^\Theta(K,\cdot)$ is finite. Given a finite Borel measure $\varphi$ on $\Omega_{C^\circ}$ with compact support, there exists a $C$-pseudo-cone $K$ with $S_{n-1}^\Theta(K,\cdot)=\varphi$.

It is the main purpose of this paper to obtain a corresponding result for cone-volume measures. Although we partially use similar arguments, there are some crucial differences. 

We use the density $\Theta$ to define the $n$-dimensional $\Theta$-weighted Hausdorff measure on ${\rm int}\,C$ by
\begin{equation}\label{1.0} 
\Ha_\Theta^n(\eta):= \int_\eta \Theta(y)\,\Ha^n(\D y),\quad \eta\in\B({\rm int}\,C).
\end{equation}
The $\Theta$-weighted cone-volume measure of $K\in ps(C)$ is then defined by
$$ V_K^\Theta(\omega) := \Ha_\Theta^n\left(\bigcup_{x\in \nu_K^{-1}(\omega)} [o,x]\right), \quad \omega\in \B(\Omega_{C^\circ}).$$
It is clear that $V_K^\Theta$ is a measure, and it will turn out that it is finite. 

\begin{theorem}\label{T1.2}
A Borel measure on $\Omega_{C^\circ}$ is the $\Theta$-weighted cone-volume measure of some $C$-pseudo-cone if and only if it is finite.
\end{theorem}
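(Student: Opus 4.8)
The necessity of finiteness is the easier direction, and I would establish it together with the finiteness assertion announced just before the theorem. The plan is first to derive a weighted analogue of the integral representation \eqref{1.1}. Introducing polar coordinates $y=tx$ (with $0\le t\le 1$ and $x\in\nu_K^{-1}(\omega)$) in the integral defining $V_K^\Theta(\omega)=\Ha_\Theta^n\bigl(\bigcup_{x\in\nu_K^{-1}(\omega)}[o,x]\bigr)$ and using the homogeneity $\Theta(tx)=t^{-q}\Theta(x)$, the radial integration produces the factor $\int_0^1 t^{n-1-q}\,\D t=1/(n-q)$, which is finite precisely because $q<n$. This yields
\[
V_K^\Theta(\omega)=\frac{1}{n-q}\int_{\nu_K^{-1}(\omega)}\Theta(x)\bigl(-\langle x,\nu_K(x)\rangle\bigr)\,\Ha^{n-1}(\D x)=\frac{1}{n-q}\int_\omega\bigl(-h_K(u)\bigr)\,\D S_{n-1}^\Theta(K,u),
\]
the weighted pseudo-cone counterpart of \eqref{1.1}. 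To bound the right-hand side I would fix any $x_0\in K$; then $x_0+C\subseteq K$ gives $h_K(u)\ge\langle x_0,u\rangle$, and since $h_C\equiv 0$ on $C^\circ$ this yields $0<-h_K(u)\le\|x_0\|$ for all $u\in\Omega_{C^\circ}$. Hence $V_K^\Theta(\Omega_{C^\circ})\le\frac{\|x_0\|}{n-q}\,S_{n-1}^\Theta(K,\Omega_{C^\circ})$, which is finite by the finiteness of the weighted surface area measure proved in \cite{Sch24c}. This settles both finiteness and necessity.

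For sufficiency I would solve the Minkowski problem by a logarithmic variational argument, with the identity above as the link to the support function. First note the scaling behaviour: from $h_{\lambda K}=\lambda h_K$ and $S_{n-1}^\Theta(\lambda K,\cdot)=\lambda^{n-1-q}S_{n-1}^\Theta(K,\cdot)$ one gets $V_{\lambda K}^\Theta=\lambda^{n-q}V_K^\Theta$ with $n-q>0$, so the total mass $V_K^\Theta(\Omega_{C^\circ})$, which equals the weighted covolume $\overline V_\Theta(K):=\Ha_\Theta^n(C\setminus K)$, ranges over all of $(0,\infty)$. It therefore suffices to produce some $K\in ps(C)$ with $V_K^\Theta=c\,\varphi$ and rescale at the end. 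To this end I would maximize the scale-invariant functional
\[
\Psi(K):=\int_{\Omega_{C^\circ}}\log\bigl(-h_K(u)\bigr)\,\D\varphi(u)-\frac{\varphi(\Omega_{C^\circ})}{n-q}\,\log\overline V_\Theta(K)
\]
over $K\in ps(C)$. Using a logarithmic family $-h_t=(-h_K)e^{tf}$ (realized through Wulff shapes of $C$-pseudo-cones, as in the surface-area case \cite{Sch24c}) together with the first-variation formula $\frac{\D}{\D t}\overline V_\Theta(K_t)\big|_0=\int(-h_K)f\,\D S_{n-1}^\Theta(K,\cdot)=(n-q)\int f\,\D V_K^\Theta$ (the boundary moving with normal speed $\delta(-h_K)$), a maximizer $K$ would satisfy $\int_{\Omega_{C^\circ}} f\,\D\varphi=\frac{\varphi(\Omega_{C^\circ})}{\overline V_\Theta(K)}\int_{\Omega_{C^\circ}} f\,\D V_K^\Theta$ for all admissible $f$, that is $V_K^\Theta=c\,\varphi$ with $c=\overline V_\Theta(K)/\varphi(\Omega_{C^\circ})$. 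Rescaling $K$ by $\lambda=c^{-1/(n-q)}$ then produces $V_K^\Theta=\varphi$.

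The heart of the matter, and the step I expect to be the main obstacle, is the existence of a maximizer. By scale invariance I would restrict to $\{K\in ps(C):\overline V_\Theta(K)=1\}$ and take a maximizing sequence $(K_j)$. Two things must be controlled. First, $\Psi$ must be bounded above and the sequence must not degenerate: if $-h_{K_j}\to 0$ on a set of positive $\varphi$-measure then $\int\log(-h_{K_j})\,\D\varphi\to-\infty$, while unbounded growth of $-h_{K_j}$ near $\partial\Omega_{C^\circ}$ must be prevented from inflating the functional. Here the finiteness of $\varphi$ and the range $n-1<q<n$ are decisive: the former keeps the logarithmic integral from being dominated by the boundary region of $\Omega_{C^\circ}$, and the latter is exactly what renders $\overline V_\Theta$ and $S_{n-1}^\Theta$ finite, the bound $q>n-1$ governing the behaviour as the boundary of $K$ escapes to infinity and $q<n$ the behaviour at the apex. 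Second, parametrizing $C$-pseudo-cones by their support functions on $\Omega_{C^\circ}$, a maximizing sequence with fixed weighted covolume should, along a subsequence, converge locally uniformly to the support function of some closed convex $K^*\subseteq C$ with $o\notin K^*$; one must then verify that no weighted covolume leaks to infinity (so that $\overline V_\Theta(K^*)=1$), that $-h_{K^*}$ stays positive on $\supp\varphi$, and that the recession cone of $K^*$ is exactly $C$, whence $K^*\in ps(C)$. Upper semicontinuity of $\Psi$ along this convergence then makes $K^*$ a maximizer. Establishing these compactness and non-degeneracy properties, essentially ruling out escape of mass toward $\partial\Omega_{C^\circ}$ or toward the origin, is the technically demanding part; once it is in place, the Euler--Lagrange computation above completes the proof.
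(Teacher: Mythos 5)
Your necessity direction is sound and coincides with the paper's: the polar-coordinate computation with the factor $\int_0^1 t^{n-1-q}\,\D t=1/(n-q)$ is exactly how the paper proves Lemma \ref{L3.5}, and the bound $\overline h_K\le\|x_0\|$ together with the finiteness of $S_{n-1}^\Theta(K,\cdot)$ from \cite{Sch24c} is the content of Lemma \ref{L3.6a}. Your variational functional is also the right one: taking logarithms of the paper's $\Phi$ gives precisely your $\Psi$.

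The genuine gap is in sufficiency: you maximize $\Psi$ in one step over all of $ps(C)$ for an arbitrary finite $\varphi$, and you explicitly defer the existence of a maximizer (``the technically demanding part'') without resolving it. This is not a routine compactness verification that can be left open --- it is the heart of the theorem, and the tools you invoke do not cover it. The first-variation formula you use (the paper's Lemma \ref{L5.1}, from \cite{Sch24c}) is established only for Wulff shapes over a \emph{compact} set $\varpi\subset\Omega_{C^\circ}$ and for pseudo-cones $C$-determined by $\varpi$; perturbing a general maximizer $K^*$ by $(-h_{K^*})e^{tf}$ with $f$ supported near $\partial\Omega_{C^\circ}$ has no justification in the available machinery, and replacing $K^*$ by a $C$-determined Wulff shape $[\overline h_{K^*}|_\varpi+tf]$ discards all constraints of $K^*$ outside $\varpi$, so it is not an admissible perturbation of $K^*$ itself. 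Likewise, your compactness wish list (no covolume leaking to infinity, $-h_{K^*}>0$ on $\supp\varphi$, recession cone exactly $C$, upper semicontinuity of $\Psi$) is exactly where a direct maximizing-sequence argument can fail: covolume \emph{can} escape along $\partial C$, and since $-\log V_\Theta$ then \emph{increases}, upper semicontinuity alone does not rule out degeneration of the constraint $V_\Theta(K)=1$.

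The paper circumvents all of this by a two-step scheme that you should compare against. First it solves the problem only for $\varphi$ with compact support $\varpi$, where the maximization takes place over the manageable class $\{K\in\K(C,\varpi):V_\Theta(K)=1\}$ (existence of the maximizer as in \cite[Thm.~4]{Sch18}) and Lemma \ref{L5.1} applies legitimately; the Euler--Lagrange step with the multiplicative family $\overline h_{K_0}+tf\overline h_{K_0}$ then matches your computation. Second, for general finite $\varphi$ it exhausts $\Omega_{C^\circ}$ by compact sets $\omega_j$, solves $V_{K_j}^\Theta=\varphi\fed\omega_j$, and obtains the missing compactness \emph{quantitatively}: the uniform bounds $c_1<b(K_j)<c_2$ of (\ref{5.1}) follow from the homogeneity of degree $n-q$ (lower bound via Lemma \ref{L5.2}, upper bound by comparing $\Ha_\Theta^n(c_2B^n)$ with the total mass $\varphi(\Omega_{C^\circ})$), after which Blaschke selection, a diagonal sequence, and the weak-continuity Lemma \ref{L3.7} for restricted measures yield $V_K^\Theta=\varphi$ in the limit. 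These uniform bounds and the localization lemma are precisely the substitute for the global compactness and non-degeneracy statements your proposal leaves unproved; without them (or an equivalent), your argument does not close.
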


In the next section, we collect some notation and basic results. Theorem \ref{T1.1n} is shown in Section \ref{sec4}. Section \ref{sec3} proves some properties of the weighted cone-volume measures, in particular, their finiteness and a special weak continuity. The proof of Theorem \ref{T1.2} is then completed in Section \ref{sec5}, by showing that finiteness is sufficient.

\section{Preliminaries}\label{sec2}

We work in $n$-dimensional Euclidean space $\R^n$ ($n\ge 2$) with scalar product $\langle\cdot\,,\cdot\rangle$ (the induced norm is denoted by $\|\cdot\|$), origin $o$, unit ball $B^n$ and unit sphere $\Sn$. 
A pointed $n$-dimensional closed convex cone $C$ is given, and $C^\circ$ is its dual cone. A unit vector $\mathfrak v\in{\rm int}\,C\cap{\rm int}(-C^\circ)$ is fixed. 

Hyperplanes and closed halfspaces of $\R^n$ are written in the form
$$ H(u,t) =\{x\in\R^n:\langle x,u\rangle = t\},\qquad H^-(u,t) =\{x\in\R^n:\langle x,u\rangle\le t\}$$
for $u\in\Sn$ and $t\in\R$, and for $t>0$ we define the compact set
$$ C^-(t) := C\cap H^-(\mathfrak{v},t).$$

For $K_j\in ps(C)$, $j\in\N_0$, we write $K_j\to K_0$ as $j\to \infty$ if there exists $t_0>0$ with $K_j\cap C^-(t_0)\not=\emptyset$ for all $j$ and $K_j\cap C^-(t)\to K_0\cap C^-(t)$ for all $t>t_0$, in the sense of the ordinary convergence of convex bodies. When in the following continuity is mentioned for pseudo-cones, it refers to this convergence.

The support function of $K\in ps(C)$ is defined by
$$h_K(x):=\sup\{\langle x,y\rangle: y\in K\},\quad x\in C^\circ.$$
We note that the supremum is a maximum if $x\in{\rm int}\,C^\circ$ and that $h_K$ is bounded and non-positive (it is negative on ${\rm int}\,C$). For the latter reason, we also write 
$$\overline h_K:= -h_K.$$ 
The halfspace $H^-(u,h_K(u))$ is the supporting halfspace (containing $K$) of $K$ with outer unit normal vector $u\in\Omega_{C^\circ}$.

We define
$$ \varrho_K(v):= \min\{\lambda\in\R: \lambda v\in K\}\quad\mbox{for }v\in \Omega_C.$$
For $y\in{\rm int}\,C$ there exists $\lambda\in\R$ with $\lambda y\in K$ since $C$ is the recession cone of $K$, and the minimum exists since $K$ is closed. We have $\varrho_K(v)v\in\partial K$ for $v\in \Omega_C$, and we define the mapping $r_K:\Omega_C\to \partial K$ by
$$ r_K(v):=\varrho_K(v)v,\quad v\in \Omega_C.$$
Further, we denote by $\partial{\hspace{1pt}}'K$ the set of points $x\in \partial K\cap {\rm int}\,C$ where $\nu_K(x)$ is unique. We shall use repeatedly that $\nu_K$ is continuous on $\partial{\hspace{1pt}}'K$. We define
$$ \alpha_K:= \nu_K\circ  r_K \quad\mbox{on }r_K^{-1}(\partial{\hspace{1pt}}'K).$$
The continuity of the maps $\varrho_K$, $r_K$, $\alpha_K$ follows from well-known results for convex bodies. These maps are known, respectively, as the radial function, the radial map. and the radial Gauss map of $K$.

Let $\varpi\subset\Omega_{C^\circ}$ be nonempty and compact. A pseudo-cone $K\in ps(C)$ is said to be $C$-determined by $\varpi$ if
$$ K= C\cap \bigcap_{u\in \varpi} H^-(u, h_K(u)).$$
Such a set is $C$-full, since $\varpi$ is compact. The set of all $C$-pseudo-cones that are $C$-determined by $\varpi$ is denoted by $\K(C,\varpi)$.

We recall the definition of Wulff shapes in the framework of $C$-pseudo-cones (see \cite[Sect. 5]{Sch18}). If $\varpi\subset\Omega_{C^\circ}$ is a nonempty compact set and $f:\varpi\to(0,\infty)$ is a positive continuous function, we define
$$ [f]:= C\cap \bigcap_{u\in\varpi} \{y\in\R^n: \langle y,u\rangle \le -f(u)\}.$$
Then $[f]\in \K(C,\varpi)$, and we call $[f]$ the Wulff shape associated with $(C,\varpi,f)$. Clearly, $\overline h_{[f]}\ge f$ on $\varpi$, and for $K\in \K(C,\varpi)$ we have $[\overline h_K|_\varpi] =K$.

\section{Proof of Theorem \ref{T1.1n}}\label{sec4}

Theorem \ref{T1.1n} could be derived from a corresponding result for surface area measures, but we prefer to deduce it directly from the following lemma. This lemma was essentially obtained in \cite{Sch21}, but we give a simplified and clarified version of the argument leading to it, with a slightly more explicit estimate.

\begin{lemma}\label{L3.1}
Let $s> 0$. There is a constant $c_1$, depending only on $C$ and  $\mathfrak{v}$, with the following property. To each compact set $\omega\subset \Omega_{C^\circ}$, there exists a number $t_\omega$, satisfying
\begin{equation}\label{4a}
t_\omega\le \frac{c_1s}{\Delta(\omega)},
\end{equation}
such that $H(u,\tau)\cap C^-(s)\not=\emptyset$ with $u\in\omega$ implies $H(u,\tau)\cap C\subset C^-(t_\omega)$.
\end{lemma}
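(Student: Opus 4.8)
The plan is to reduce the statement to a quantitative estimate, in the $\mathfrak{v}$-direction, on the cross-sections $\sigma(u,\tau):=H(u,\tau)\cap C$ of the cone. Since $u\in\Omega_{C^\circ}\subset\inn C^\circ$, one has $\langle w,u\rangle<0$ for every $w\in\Sn\cap C$, so $\sigma(u,\tau)$ is nonempty only for $\tau\le 0$, and it reduces to $\{\0\}$ when $\tau=0$ (the case $\tau>0$ makes the hypothesis vacuous). For $\tau<0$ I would parametrize $\sigma(u,\tau)$ radially: each of its points is $r(w)\,w$ with $w\in\Sn\cap C$ and $r(w)=|\tau|/(-\langle w,u\rangle)$, since every ray from $\0$ through $\inn C$ meets $H(u,\tau)$ exactly once. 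This gives the key identity
$$\langle r(w)w,\mathfrak{v}\rangle=|\tau|\,\frac{\langle w,\mathfrak{v}\rangle}{-\langle w,u\rangle},$$
in which numerator and denominator are both positive (recall $-\mathfrak{v}\in\inn C^\circ$, hence $\langle w,\mathfrak{v}\rangle>0$ on $\Sn\cap C$).

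Writing $M(u):=\max_{w\in\Sn\cap C}\frac{\langle w,\mathfrak{v}\rangle}{-\langle w,u\rangle}$ and $m(u):=\min_{w\in\Sn\cap C}\frac{\langle w,\mathfrak{v}\rangle}{-\langle w,u\rangle}$ (both attained and positive, by compactness of $\Sn\cap C$ and continuity of the function), the identity yields
$$\max_{x\in\sigma(u,\tau)}\langle x,\mathfrak{v}\rangle=|\tau|\,M(u),\qquad \min_{x\in\sigma(u,\tau)}\langle x,\mathfrak{v}\rangle=|\tau|\,m(u).$$
The hypothesis $H(u,\tau)\cap C^-(s)\neq\emptyset$ says precisely that the minimum is at most $s$, i.e. $|\tau|\,m(u)\le s$, while the conclusion $\sigma(u,\tau)\subset C^-(t_\omega)$ says the maximum is at most $t_\omega$, i.e. $|\tau|\,M(u)\le t_\omega$. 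Hence it suffices to set $t_\omega:=s\,\sup_{u\in\omega}M(u)/m(u)$ and to bound this supremum by $c_1/\Delta(\omega)$. For this I would use the two elementary bounds $m(u)\ge\mu_0:=\min_{w\in\Sn\cap C}\langle w,\mathfrak{v}\rangle>0$ (a constant depending only on $C$ and $\mathfrak{v}$) and $M(u)\le 1/\delta(u)$, where $\delta(u):=\min_{w\in\Sn\cap C}(-\langle w,u\rangle)$, using $\langle w,\mathfrak{v}\rangle\le 1$ and $-\langle w,u\rangle\le 1$.

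It remains to compare $\delta(u)$ with the spherical distance $\Delta(\{u\})$ of $u$ from $\partial\Omega_{C^\circ}$, and this is the one genuinely geometric point. Since $C^\circ$ is the intersection of the halfspaces $H^-(w,0)$, $w\in\Sn\cap C$, the quantity $\delta(u)$ is exactly the Euclidean distance of the unit vector $u$ from $\partial C^\circ$; as $\partial C^\circ$ is a union of rays through $\0$, computing the distance from a unit vector to such a ray gives $\delta(u)=\sin\Delta(\{u\})$. Because $C$ has interior points, $C^\circ$ is pointed, so $\Omega_{C^\circ}$ lies in an open hemisphere and $\Delta(\{u\})<\pi/2$; the estimate $\sin\theta\ge(2/\pi)\theta$ on $[0,\pi/2]$ then yields $\delta(u)\ge(2/\pi)\Delta(\{u\})$. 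Taking the infimum over $u\in\omega$ and using $\Delta(\omega)=\inf_{u\in\omega}\Delta(\{u\})$, I obtain
$$\sup_{u\in\omega}\frac{M(u)}{m(u)}\le\frac{1}{\mu_0\inf_{u\in\omega}\delta(u)}\le\frac{\pi}{2\mu_0}\cdot\frac{1}{\Delta(\omega)},$$
so the claim holds with $c_1:=\pi/(2\mu_0)$, depending only on $C$ and $\mathfrak{v}$. The main obstacle is the clean identification $\delta(u)=\sin\Delta(\{u\})$, together with the use of pointedness of $C^\circ$ to keep all relevant angles below $\pi/2$; the rest is bookkeeping with the radial parametrization.
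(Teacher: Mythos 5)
Your proof is correct, and it takes a genuinely different route from the paper's. The paper defines $t_\omega$ as the extremal value $\max\{\langle x,\mathfrak v\rangle : x\in C\cap H(u,\tau),\ u\in\omega,\ H(u,\tau)\cap C^-(s)\neq\emptyset\}$ and estimates it synthetically: it picks an extremal point $x\in\partial C$ on an extremal hyperplane $H(u',\tau')$, forms the triangle with vertices $x$, a point $y\in H(u',\tau')\cap\partial C\cap H(\mathfrak v,s)$, and $z=[x,o]\cap H(\mathfrak v,s)$, and applies the law of sines together with $\alpha\ge\Delta(\omega)$, $\gamma>\pi/2$, $a\le c_2s$, and $\sin\alpha\ge 2\alpha/\pi$. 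You instead parametrize each cross-section $H(u,\tau)\cap C$ radially over $\Sn\cap C$, which converts both hypothesis and conclusion into exact scalar statements, $|\tau|\,m(u)\le s$ and $|\tau|\,M(u)\le t_\omega$, so that $t_\omega:=s\sup_{u\in\omega}M(u)/m(u)$ is forced and the lemma reduces to bounding the ratio $M(u)/m(u)$; the geometric content is then concentrated in the clean identity $\min_{w\in\Sn\cap C}(-\langle w,u\rangle)=d(u,\partial C^\circ)=\sin\Delta(\{u\})$, which you justify correctly (the first equality via the inball radius of $C^\circ=\bigcap_{w\in\Sn\cap C}H^-(w,0)$, the second because $\partial C^\circ$ is a union of rays and $\Delta(\{u\})<\pi/2$ by pointedness of $C^\circ$ --- note that $\Delta(\{u\})\le\pi/2$ would already suffice, both for monotonicity of sine on $[0,\pi/2]$ and for the point-to-ray distance formula). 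Combined with $m(u)\ge\mu_0:=\min_{w\in\Sn\cap C}\langle w,\mathfrak v\rangle>0$ (which uses $-\mathfrak v\in\inn C^\circ$, exactly as fixed in the paper) and $\Delta(\omega)=\inf_{u\in\omega}\Delta(\{u\})$, this gives the fully explicit constant $c_1=\pi/(2\mu_0)$, whereas the paper's constant remains implicit inside $c_2$; your argument also shows that compactness of $\omega$ is never needed beyond $\Delta(\omega)>0$, and it identifies the optimal admissible $|\tau|$ for each $u$, both slight strengthenings. What the paper's route buys in exchange is a short, picture-driven planar argument that avoids the dual-cone distance computation. Two cosmetic points, neither a gap: the rays from $o$ meeting $H(u,\tau)$ are those through all of $C\setminus\{o\}$, not only through $\inn C$ (your formula $r(w)=|\tau|/(-\langle w,u\rangle)$ over all $w\in\Sn\cap C$ is the correct statement, since $-\langle w,u\rangle>0$ there for $u\in\inn C^\circ$); and you should record, as the paper does, that the degenerate cases are harmless: $\omega=\emptyset$ is trivial, and for $\tau=0$ one has $H(u,0)\cap C=\{o\}\subset C^-(t_\omega)$.
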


\begin{proof}
Let $\omega\subset\Omega_{C^\circ}$ be a compact subset. The assertion is trivial if $\omega=\emptyset$ or $\omega=\{-\mathfrak{v}\}$, so we exclude these cases. Define
$$ t_\omega:= \max\{\langle x,\mathfrak{v}\rangle: x\in C\cap H(u,\tau), \mbox{ where } u\in\omega,\, H(u,\tau)\cap C^-(s)\not=\emptyset\}.$$
The maximum exists, due to the compactness of $C^-(s)$ and $\omega$. Clearly, $t_\omega>s$. There is a hyperplane $ H(u',\tau')$ with $u'\in\omega$ and $H(u',\tau')\cap C^-(s)\not=\emptyset$, and there is a point $x\in H(u',\tau')\cap C$ such that $t_\omega=\langle x,\mathfrak{v}\rangle$. Necessarily, $x\in\partial C$. By the definition of $t_\omega$, if $H(u,\tau)$ is any hyperplane with $u\in\omega$ and $H(u,\tau)\cap C^-(s)\not=\emptyset$, then $H(u,\tau)\cap C\subset H^-(\mathfrak{v},t_\omega)$.

By the choice of the hyperplane $H(u',\tau')$, we can choose a point $y\in H(u',\tau')\cap\partial C\cap H(\mathfrak{v},s)$. The segment $[x,o]$ intersects the hyperplane $H(\mathfrak{v},s)$ in a point $z$. Let 
$$ a:=\|y-z\|,\quad b:=\|z-x\|,\quad c:=\|x-y\|.$$
For the triangle with vertices $x,y,z$, let $\alpha$ be the angle at $x$, $\beta$ the angle at $y$, and $\gamma$ the angle at $z$. Then
\begin{equation}\label{3.1} 
\alpha\ge \Delta(\omega).
\end{equation}
This holds since the segment $[x,z]$ lies in a supporting hyperplane of $C$ with an outer normal vector $v$ (hence $v\in\partial C^\circ$), the segment $[x,y]$ lies in a hyperplane with normal vector $u'\in\omega$, hence the angle $\alpha$ is at least the angle between $u'$ and $v$, which is at least $\Delta(\omega)$.

Let $h$ be the distance of $x$ from the hyperplane $H(\mathfrak{v},s)$; thus $t_\omega=h+s$. We have
\begin{equation}\label{3.1a}
\frac{c}{\sin\gamma}= \frac{a}{\sin\alpha} \quad\mbox{and}\quad \frac{h}{c}\le\sin\beta.
\end{equation}
For the latter inequality, we note that there is a point $z'$ on the line through $y$ and $z$ such that the triangle with vertices $x,y,z'$ has a right angle at $z'$, hence $\|x-z'\|/c=\sin\beta$. Since $z'\in H({\mathfrak v}, s)$, we have $h\le \|x-z'\|$. From (\ref{3.1a}) we obtain
\begin{equation}\label{3.2n}
h\le \frac{a}{\sin\alpha} \sin\gamma\sin\beta.
\end{equation}
There is a positive constant $c_2$ depending only on $C$ and $\mathfrak{v}$ such that $a\le c_2s$. Since $\mathfrak{v}\in{\rm int}\,C$, we have $\gamma>\pi/2$ and thus $\alpha<\pi/2$, hence $\sin\alpha\ge 2\alpha/\pi$. Therefore, it follows from (\ref{3.2n}) and (\ref{3.1})  that 
$$ h\le \frac{c_2s}{\sin\alpha}\le\frac{c_2s}{\Delta(\omega)}\cdot\frac{2}{\pi}$$ 
and hence also
$$ t_\omega\le \frac{c_1s}{\Delta(\omega)}$$
with a constant $c_1$ depending only on $C$ and $\mathfrak{v}$, as stated.
\end{proof}

\noindent{\em Proof of Theorem} \ref{T1.1n}. 
Let $K$ be a $C$-pseudo-cone. We define 
$$ s:= \min\{t\in\R:K\cap H(\mathfrak{v},t)\not=\emptyset\}.$$  
With this number $s$, we define $c_1$ as in Lemma \ref{L3.1}. Then $c_1$ depends only on $C$, $\mathfrak{v}$ and $K$. 

Let $\omega\subset\Omega_{C^\circ}$ be a compact subset. To this $\omega$, there is a number $t_\omega$ satisfying the properties listed in Lemma \ref{L3.1}. Let $x\in \nu_K^{-1}(\omega)$. Then $x$ is contained in a supporting hyperplane $H(u,\tau)$ of $K$ with $u\in\omega$, and this hyperplane intersects $C^-(s)$. It follows from the properties of $t_\omega$ that $x\in H^-(\mathfrak{v}, t_\omega)$. Thus, $\nu_K^{-1}(\omega)\subset C^-(t_\omega)$. Therefore, the set
\begin{equation}\label{3.b}
M(K,\omega) := \bigcup_{x\in\nu_K^{-1}(\omega)} [o,x]
\end{equation}
satisfies $M(K,\omega) \subset C^-(t_\omega)$.

Choose $z\in K\cap H(\mathfrak{v},s)$. Then $C_z:= C+z\subseteq K$ and hence 
\begin{eqnarray*}
V_K(\omega) &=& \Ha^n(M(K,\omega)) \le \Ha^n(C^-(t_\omega)\setminus C_z)\\
&=& \Ha^n(C^-(s)) + \int_s^{t_\omega}[\Ha^{n-1}(H(\mathfrak{v},\tau)\cap C)- \Ha^{n-1}(H(\mathfrak{v},\tau)\cap C_z)]\,\D\tau.
\end{eqnarray*}
Since $t_\omega\ge s$, we have 
$$ \Ha^n(C^-(s)) = \Ha^n(C^-(1))s^n\le[\Ha^n(C^-(1)s]t_\omega^{n-1} = c_3t_\omega^{n-1},$$
where $c_3$ depends only on $C$, $\mathfrak{v}$ and $K$.

With $\Ha^{n-1}(C\cap H(\mathfrak{v},1))=:c_4$ we get
$$ V_K(\omega) \le c_3t_\omega^{n-1} + c_4\int_0^{t_\omega} [\tau^{n-1}-(\tau-s)^{n-1}]\,\D\tau.$$
Using $s\le \tau$ we have $\tau^{n-1}-(\tau-s)^{n-1}\le c_5\tau^{n-2}$, where $c_5$ depends only on $n$ (and thus on $C$). This gives
$$ V_K(\omega) \le c_3t_\omega^{n-1}+\frac{c_4c_5}{n-1}t_\omega^{n-1} = c_6t_\omega^{n-1},$$
where $c_6$ depends only on $C$, $\mathfrak v$ and $K$. Now Lemma \ref{L3.1} finishes the proof. \hfill$\Box$

\section{Properties of the weighted cone-volume measures}\label{sec3}

The following is Lemma 6 from \cite{Sch24c}.

\begin{lemma}\label{L3.1w}
Let $\varpi\subset\Omega_{C^\circ}$ be a nonempty, compact subset, and let $K_j\in\K(\C,\varpi)$ for $j\in\N_0$. Then $K_j\to K_0$ as $j\to\infty$ implies the weak convergence $S_{n-1}^\Theta(K_j,\cdot)\stackrel{w}{\to} S_{n-1}(K_0,\cdot)$.
\end{lemma}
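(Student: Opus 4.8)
The plan is to transport both measures to the unit sphere via the radial map and then apply the dominated convergence theorem. Since all the measures $S_{n-1}^\Theta(K_j,\cdot)$ are finite and concentrated on the compact set $\varpi$, it suffices to show $\int g\,\D S_{n-1}^\Theta(K_j,\cdot)\to\int g\,\D S_{n-1}^\Theta(K_0,\cdot)$ for every continuous $g$ with compact support in $\Omega_{C^\circ}$. First I would rewrite
$$\int g\,\D S_{n-1}^\Theta(K,\cdot)=\int_{\partial' K} g(\nu_K(y))\,\Theta(y)\,\Ha^{n-1}(\D y)$$
as an integral over $\Omega_C$ using the radial map $r_K(v)=\varrho_K(v)v$. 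Its Jacobian turns $\Ha^{n-1}$ on $\partial K$ into $\varrho_K(v)^{n-1}/(-\langle v,\alpha_K(v)\rangle)\,\Ha^{n-1}(\D v)$; since $\varrho_K(v)\langle v,\alpha_K(v)\rangle=h_K(\alpha_K(v))=-\overline h_K(\alpha_K(v))$, this denominator equals $\overline h_K(\alpha_K(v))/\varrho_K(v)$, and together with the homogeneity $\Theta(\varrho_K(v)v)=\varrho_K(v)^{-q}\Theta(v)$ one obtains the clean form
$$\int g\,\D S_{n-1}^\Theta(K,\cdot)=\int_{U_K} g(\alpha_K(v))\,\Theta(v)\,\frac{\varrho_K(v)^{\,n-q}}{\overline h_K(\alpha_K(v))}\,\Ha^{n-1}(\D v),$$
where $U_K\subset\Omega_C$ is the radial preimage of the boundary points in $\inn C$ with unique normal. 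Extending the integrand by $0$ outside $U_{K_j}$, it remains to pass to the limit over the fixed finite-measure space $\Omega_C$.

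For the pointwise limit I would invoke the standard convergence properties of convergent convex bodies: for $\Ha^{n-1}$-almost every $v$ the point $r_{K_0}(v)$ is a smooth boundary point, $r_{K_j}(v)\to r_{K_0}(v)$, and hence $\varrho_{K_j}(v)\to\varrho_{K_0}(v)$ and $\alpha_{K_j}(v)\to\alpha_{K_0}(v)$ (normals of convergent bodies converge at smooth points of the limit). Together with the uniform convergence $\overline h_{K_j}\to\overline h_{K_0}$ on $\varpi$, this gives a.e.\ convergence of the integrands to that of $K_0$, the negligible exceptional set consisting of the non-smooth points and the directions whose radial image lies on $\partial C$.

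The crucial analytic input is a uniform domination, and here I would exploit that every $K_j\in\K(C,\varpi)$ is $C$-full with a uniformly bounded cap. From $K_j\to K_0$ the support functions converge uniformly on $\varpi$, so $\delta/2\le\overline h_{K_j}\le M$ on $\varpi$ for all large $j$; and because $-\langle v,u\rangle$ is bounded below by a positive constant $\varepsilon_\varpi$ on the compact product $(\Sn\cap C)\times\varpi$ (as $\varpi\subset\inn C^\circ$), the identity $\varrho_{K_j}(v)=\overline h_{K_j}(\alpha_{K_j}(v))/(-\langle v,\alpha_{K_j}(v)\rangle)$ yields $\delta/2\le\varrho_{K_j}(v)\le M/\varepsilon_\varpi$ uniformly in $j$ and $v\in U_{K_j}$. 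Since $\Theta$ is bounded on $\Sn\cap C$ and $\overline h_{K_j}\ge\delta/2$, the integrand is bounded by a constant on $\Omega_C$, which has finite spherical measure; dominated convergence then finishes the proof.

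The part I expect to be most delicate is the verification that the exceptional $v$ form a null set and, relatedly, that the varying integration domains $U_{K_j}$ cause no trouble in the pointwise limit, i.e.\ that for a.e.\ $v$ one has $v\in U_{K_j}$ for all large $j$ exactly when $v\in U_{K_0}$. This hinges on the measure-zero nature of the directions whose radial image lands on $\partial C$ or at a non-smooth point, which I would argue from the bi-Lipschitz character of $r_{K_0}$ on $\Omega_C$ together with the full-measure smoothness of $\partial K_0$. The uniform cap bound, by contrast, is the decisive estimate that reduces the whole argument to a routine application of dominated convergence.
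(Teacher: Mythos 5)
Your argument is correct and is essentially the proof of this lemma, which the paper itself does not reprove but imports as Lemma~6 of \cite{Sch24c}: the transport identity you derive is exactly Lemma~\ref{L3.3} (Lemma~5 of \cite{Sch24c}), and the proof there likewise proceeds via a.e.\ convergence of $\varrho_{K_j}$ and $\alpha_{K_j}$ together with bounded convergence on the finite measure space $\Omega_C$. Two small points: the fact you use tacitly in the domination step, namely $\alpha_{K_j}(v)\in\varpi$ for $v\in U_{K_j}$ (at a smooth boundary point of a $C$-determined pseudo-cone lying in $\inn C$ exactly one halfspace $H^-(u,h_K(u))$, $u\in\varpi$, is active, since each active normal belongs to the normal cone), should be made explicit --- it is precisely what guarantees $-\langle v,\alpha_{K_j}(v)\rangle\ge\varepsilon_\varpi$ and the concentration of all the measures on $\varpi$; and the domain issue you single out as delicate is simpler than you fear, since $v\in\inn C$ forces $r_K(v)\in\inn C$ (so radial images never meet $\partial C$), each set $\Omega_C\setminus U_{K_j}$ is $\Ha^{n-1}$-null by the locally bi-Lipschitz radial map, and hence a.e.\ $v$ lies in $U_{K_0}$ and in all $U_{K_j}$ simultaneously by a countable union of null sets --- no ``exactly when'' matching of domains is needed.
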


We need also Lemma 5 from \cite{Sch24c}. (In the derivation of \cite[Lem. 5]{Sch24c} from \cite[Lem. 4]{Sch24c}, we either apply Lemma 4  to the positive and negative part of $g$, or state Lemma 4 for $\Ha^{n-1}$-integrable functions $F$.) Here $\int\cdot\,\D v$ indicates integration with respect to spherical Lebesgue measure.

\begin{lemma}\label{L3.3}
Let $K\in ps(C)$. Let $\omega\subseteq\Omega_{C^\circ}$ be a nonempty Borel set and $g:\omega\to\R$ be bounded and measurable. Then
$$ \int_\omega g(u)\,S^\Theta_{n-1}(K,\D u) = \int_{\alpha_K^{-1}(\omega)} g(\alpha_K(v))\Theta(r_K(v))\frac{\varrho^{n-1}_K(v)}{|\langle v,\alpha_K(v)\rangle|}\,\D v.$$
\end{lemma}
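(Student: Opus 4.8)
The plan is to read off both sides as integrals against a single base measure living on the boundary of $K$, and to connect them by the radial parametrization of that boundary. First I would use that, by its very definition, $S^\Theta_{n-1}(K,\cdot)$ is the image under the Gauss map $\nu_K$ of the measure $\Theta\,\Ha^{n-1}$ carried by $\partial'K$: indeed $S^\Theta_{n-1}(K,\omega)=\int_{\nu_K^{-1}(\omega)}\Theta(y)\,\Ha^{n-1}(\D y)$, and $\nu_K^{-1}(\omega)$ lies in $\partial'K$ up to an $\Ha^{n-1}$-null set (the non-smooth boundary points). The transformation rule for image measures then yields
$$\int_\omega g(u)\,S^\Theta_{n-1}(K,\D u)=\int_{\nu_K^{-1}(\omega)} g(\nu_K(y))\,\Theta(y)\,\Ha^{n-1}(\D y),$$
established first for indicators $g=\mathbf 1_\eta$ with $\eta\in\B(\omega)$, and then extended by linearity and bounded convergence to every bounded measurable $g$.

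The core step is a change of variables on the right-hand side, transporting the boundary integral to an integral over the direction sphere $\Omega_C$ by means of the radial map $r_K(v)=\varrho_K(v)v$. Since $r_K$ is a homeomorphism of $\Omega_C$ onto the part of $\partial K$ lying in $\inn C$, and is Lipschitz on compact subsets of $\Omega_C$, the area formula applies, and the Jacobian of $r_K$ at a direction $v$ where $\nu_K$ is unique equals $\varrho_K(v)^{n-1}/|\langle v,\alpha_K(v)\rangle|$. Here the factor $\varrho_K(v)^{n-1}$ reflects the scaling of a spherical patch out to radius $\varrho_K(v)$, while $1/|\langle v,\alpha_K(v)\rangle|$ is the tilt factor between the tangent plane of the sphere (orthogonal to $v$) and the tangent plane of $\partial K$ (orthogonal to $\alpha_K(v)=\nu_K(r_K(v))$); the absolute value enters because the origin lies outside $K$, so $\langle v,\alpha_K(v)\rangle<0$. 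Substituting $y=r_K(v)$ and using $\nu_K(r_K(v))=\alpha_K(v)$, $\Theta(y)=\Theta(r_K(v))$, together with $r_K^{-1}(\nu_K^{-1}(\omega))=\alpha_K^{-1}(\omega)$, converts the boundary integral into precisely the asserted right-hand side.

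The main obstacle is the rigorous justification of this Jacobian, which rests on the almost-everywhere differential structure of $\partial K$. One must invoke that $\nu_K$ is defined and continuous on $\partial'K$, that $\Ha^{n-1}(\partial K\setminus\partial'K)=0$, and that $\varrho_K$ and hence $r_K$ are Lipschitz on compact subsets of $\Omega_C$, so that $r_K$ is differentiable almost everywhere (with respect to spherical Lebesgue measure) and the area formula can be applied with the stated Jacobian. I would first prove the transformation for nonnegative measurable integrands $F$ on $\partial K$, and then obtain the final statement for bounded measurable $g$ by applying it to the positive and negative parts of $F=(g\circ\nu_K)\,\Theta$, or equivalently by formulating the transformation directly for $\Ha^{n-1}$-integrable $F$. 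Throughout I would localize to compact subsets $\varpi\subset\Omega_{C^\circ}$ and exhaust $\Omega_{C^\circ}$, since $r_K$ is only locally bi-Lipschitz and the relevant quantities are guaranteed finite only on compacta.
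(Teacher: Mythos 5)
Your proposal is correct and takes essentially the same route as the paper: the paper does not prove this lemma itself but cites it as Lemma 5 of \cite{Sch24c}, remarking that it follows from the boundary-to-sphere transformation formula of \cite[Lem.\ 4]{Sch24c} by applying that formula to the positive and negative parts of $F=(g\circ\nu_K)\,\Theta$, or by stating it for $\Ha^{n-1}$-integrable $F$ --- which is precisely your concluding scheme. Your additional justification of the radial-map Jacobian $\varrho_K^{n-1}(v)/|\langle v,\alpha_K(v)\rangle|$ via local Lipschitzness of $r_K$ and the area formula simply supplies the content of the cited Lemma 4, so nothing essential is missing or different.
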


We can deduce properties of the weighted cone-volume measures from those of the weighted surface area measures, by using the following relation between them. It is reminiscent of (\ref{1.1}).

\begin{lemma}\label{L3.5}
For $K\in ps(C)$ and $\omega\in\B(\Omega_{C^\circ})$ we have
$$ V_K^\Theta(\omega)= \frac{1}{n-q} \int_\omega \overline h_K\,\D S_{n-1}^\Theta(K,\cdot).$$
\end{lemma}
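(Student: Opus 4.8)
The plan is to prove the identity by showing that \emph{both} sides reduce to one and the same integral over the sphere, namely
$$ \frac{1}{n-q}\int_{\alpha_K^{-1}(\omega)} \Theta(v)\,\varrho_K(v)^{n-q}\,\D v, $$
where the left-hand side is computed by polar coordinates and the right-hand side by the transformation formula of Lemma \ref{L3.3}.

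First I would treat the left-hand side. The set $\bigcup_{x\in\nu_K^{-1}(\omega)}[o,x]$ is star-shaped with respect to $o$: up to an $\Ha^n$-null set (the cone over the $\Ha^{n-1}$-null set $\partial K\setminus\partial'K$ on which $\nu_K$ is not unique) it coincides with $\{\rho v: v\in\alpha_K^{-1}(\omega),\ 0\le\rho\le\varrho_K(v)\}$, because the boundary points carrying a normal in $\omega$ are exactly the points $r_K(v)=\varrho_K(v)v$ with $v\in\alpha_K^{-1}(\omega)$. Introducing polar coordinates $y=\rho v$, so that $\Ha^n(\D y)=\rho^{n-1}\,\D\rho\,\D v$, and using the homogeneity $\Theta(\rho v)=\rho^{-q}\Theta(v)$, the definition of $V_K^\Theta$ gives
$$ V_K^\Theta(\omega)=\int_{\alpha_K^{-1}(\omega)}\Theta(v)\int_0^{\varrho_K(v)}\rho^{\,n-1-q}\,\D\rho\,\D v. $$
Here the condition $q<n$ from Definition \ref{D1.1} ensures $n-1-q>-1$, so the inner integral converges at $\rho=0$ and equals $\varrho_K(v)^{n-q}/(n-q)$, which yields the target integral.

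Next I would treat the right-hand side. Since $\overline h_K$ is bounded and measurable on $\omega$, Lemma \ref{L3.3} with $g=\overline h_K$ rewrites $\int_\omega\overline h_K\,\D S_{n-1}^\Theta(K,\cdot)$ as $\int_{\alpha_K^{-1}(\omega)}\overline h_K(\alpha_K(v))\,\Theta(r_K(v))\,\varrho_K^{n-1}(v)\,|\langle v,\alpha_K(v)\rangle|^{-1}\,\D v$. The crucial pointwise identity is that $r_K(v)$ lies in the supporting hyperplane with outer normal $\alpha_K(v)=\nu_K(r_K(v))$, so that $h_K(\alpha_K(v))=\langle r_K(v),\alpha_K(v)\rangle=\varrho_K(v)\langle v,\alpha_K(v)\rangle$; as $v\in\inn C$ and $\alpha_K(v)\in\inn C^\circ$ force this scalar product to be negative while $\varrho_K(v)>0$, this gives $\overline h_K(\alpha_K(v))=\varrho_K(v)\,|\langle v,\alpha_K(v)\rangle|$. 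Substituting this together with $\Theta(r_K(v))=\varrho_K(v)^{-q}\Theta(v)$, the factors $|\langle v,\alpha_K(v)\rangle|$ cancel and the powers of $\varrho_K$ combine to $\varrho_K(v)^{n-q}$, so that $\tfrac{1}{n-q}\int_\omega\overline h_K\,\D S_{n-1}^\Theta(K,\cdot)$ equals the same spherical integral, finishing the proof.

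The main obstacle is not a hard estimate but careful bookkeeping: verifying the polar description of $\bigcup_{x\in\nu_K^{-1}(\omega)}[o,x]$ together with the $\Ha^n$-negligibility of the cone over the non-smooth part of the boundary, and establishing the pointwise relation $\overline h_K(\alpha_K(v))=\varrho_K(v)\,|\langle v,\alpha_K(v)\rangle|$ among the support function, the radial function and the radial Gauss map, with the signs tracked correctly. Once these are in place, matching the two spherical integrals is immediate, and the hypothesis $q<n$ is precisely what the radial integral needs in order to converge at the origin.
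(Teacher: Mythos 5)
Your proposal is correct and follows essentially the same route as the paper's proof: both sides are reduced, via polar coordinates with the homogeneity $\Theta(\rho v)=\rho^{-q}\Theta(v)$ on one side and Lemma \ref{L3.3} with $g=\overline h_K$ plus the identity $\overline h_K(\alpha_K(v))=\varrho_K(v)\,|\langle v,\alpha_K(v)\rangle|$ (the paper's (\ref{3.3a})) on the other, to the common integral $\frac{1}{n-q}\int_{\alpha_K^{-1}(\omega)}\Theta(v)\varrho_K^{n-q}(v)\,\D v$. Your explicit attention to the null cone over the non-smooth boundary part and to the sign of $\langle v,\alpha_K(v)\rangle$ only makes precise steps the paper passes over silently.
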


\begin{proof}
Let  $K\in ps(C)$ and $\omega\in\B(\Omega_{C^\circ})$. Let $M(K,\omega)$ be defined by (\ref{3.b}). We use polar coordinates and the homogeneity of $\Theta$ to obtain
\begin{eqnarray}
V_K^\Theta(\omega) &=& \int_{M(K,\omega)}  \Theta(y)\,\Ha^n(\D y)\nonumber\\
&=& \int_0^\infty \int_{\Sn} {\mathbbm 1}_{M(K,\omega)}(rv)\Theta(rv)r^{n-1}\,\D v\, \D r\nonumber\\
&=& \int_{r_K^{-1}(\nu_K^{-1}(\omega))}\Theta(v) \int_0^{\varrho_K(v)} r^{n-1-q}\,\D r\,\D v\nonumber\\
&=& \frac{1}{n-q} \int_{\alpha_K^{-1}(\omega)} \Theta(v)\varrho_K^{n-q}(v)\,\D v.\label{3.1x}
\end{eqnarray}
Lemma \ref{L3.3} with $g:= \overline h_K|_\omega$ gives
$$ \int_\omega \overline h_K(u)\,S_{n-1}^\Theta(K,\D u) = \int_{\alpha_K^{-1}(\omega)} \overline h_K(\alpha_K(v))\Theta(r_K(v))\frac{\varrho_K^{n-1}(v)}{|\langle v,\alpha_K(v)\rangle|}\,\D v.$$
Here 
\begin{equation}\label{3.3a}
\overline h_K(\alpha_K(v))=|\langle \varrho_K(v)v,\alpha_K(v)\rangle|
\end{equation}
and $\Theta(r_K(v)) =\varrho_K(v)^{-q}\Theta(v)$, therefore
$$ \int_\omega \overline h_K\,\D S_{n-1}^\Theta(K,\cdot) = \int_{\alpha_K^{-1}(\omega)} \Theta(v)\varrho_K^{n-q}(v)\,\D v =(n-q)V_K^\Theta(\omega)$$
by (\ref{3.1x}), as stated.
\end{proof}

We use this to derive the following two lemmas.

\begin{lemma}\label{L3.6a}
For $K\in ps(C)$, the $\Theta$-weighted cone-volume measure $V_K^\Theta$ is finite.
\end{lemma}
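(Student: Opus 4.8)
The plan is to reduce the finiteness of $V_K^\Theta$ to the finiteness of the $\Theta$-weighted surface area measure $S_{n-1}^\Theta(K,\cdot)$, exploiting the integral relation established in Lemma \ref{L3.5}. That lemma gives
$$ V_K^\Theta(\omega)= \frac{1}{n-q} \int_\omega \overline h_K\,\D S_{n-1}^\Theta(K,\cdot),$$
so taking $\omega=\Omega_{C^\circ}$ yields $(n-q)V_K^\Theta(\Omega_{C^\circ})=\int_{\Omega_{C^\circ}} \overline h_K\,\D S_{n-1}^\Theta(K,\cdot)$. Thus it suffices to bound this integral. The key structural fact to recall is that the weighted surface area measure is finite: this is exactly the cited result from \cite{Sch24c}, which I would invoke directly. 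So the measure $S_{n-1}^\Theta(K,\cdot)$ has finite total mass on $\Omega_{C^\circ}$.

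The remaining obstacle is that $\overline h_K=-h_K$ is \emph{not bounded} on $\Omega_{C^\circ}$: as $u$ approaches $\partial\Omega_{C^\circ}$, the supporting halfspaces of $K$ recede and $\overline h_K(u)$ can grow without bound (indeed $K$ is unbounded). Hence one cannot simply pull out a sup of $\overline h_K$. Instead I would return to the explicit radial representation of $V_K^\Theta$ obtained inside the proof of Lemma \ref{L3.5}, namely
$$ V_K^\Theta(\Omega_{C^\circ}) = \frac{1}{n-q}\int_{\alpha_K^{-1}(\Omega_{C^\circ})} \Theta(v)\varrho_K^{n-q}(v)\,\D v,$$
and observe that this is really just the weighted volume $\Ha_\Theta^n$ of the set $M(K,\Omega_{C^\circ})=\bigcup_{x\in\partial'K}[o,x]$, which is contained in $C\setminus\inn K$ union a bounded piece. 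The natural strategy is to estimate $\int \Theta(y)\,\Ha^n(\D y)$ over the region swept out, using the homogeneity $\Theta(rv)=r^{-q}\Theta(v)$ together with $n-1<q<n$: the exponent $n-q\in(0,1)$ in $r^{n-1-q}$ keeps the radial integral $\int_0^{\varrho_K(v)} r^{n-1-q}\,\D r = \varrho_K(v)^{n-q}/(n-q)$ finite and, crucially, controls the growth of $\varrho_K(v)^{n-q}$ against the decay of $\Theta$.

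The main work, then, is to show the spherical integral $\int_{\Omega_C}\Theta(v)\varrho_K^{n-q}(v)\,\D v$ converges despite $\varrho_K(v)\to\infty$ as $v\to\partial\Omega_C$. Here I would use that $\Theta$ is continuous and homogeneous of degree $-q$ with $q>n-1$, so $\Theta(v)$ decays fast enough near $\partial C$; this is precisely the regime that made the weighted surface area measure finite in \cite{Sch24c}, and the same integrability mechanism applies. Concretely, I expect to split off a bounded central part (say $M(K,\omega)\cap C^-(t_0)$ for some fixed $t_0$, which has finite weighted volume by continuity of $\Theta$ on the compact set $C^-(t_0)\setminus\inn K'$) and then bound the unbounded part by comparing $M(K,\Omega_{C^\circ})$ with $C\setminus C_z$ for a translate $C_z=C+z\subseteq K$, reducing to an integral of $\Theta$ over the cone-with-a-cone-removed region that converges by the degree condition $n-1<q<n$. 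The single step I expect to be delicate is verifying this convergence uniformly, i.e.\ controlling the interplay between the blow-up of $\varrho_K$ and the decay of $\Theta$ near the boundary of the cone; once that integrability is secured, finiteness of $V_K^\Theta$ follows immediately.
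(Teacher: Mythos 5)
There is a genuine error at the heart of your proposal: the claim that $\overline h_K$ is \emph{not} bounded on $\Omega_{C^\circ}$ is false, and the paper even states the correct fact explicitly in its preliminaries (``$h_K$ is bounded and non-positive''). Indeed, fix any point $x_0\in K$. For every $u\in\Omega_{C^\circ}$ we have $h_K(u)\ge\langle x_0,u\rangle\ge-\|x_0\|$, while $h_K(u)\le 0$ because $K\subset C$ and $u\in C^\circ$; hence $0\le\overline h_K\le\|x_0\|$ on all of $\Omega_{C^\circ}$. Geometrically, $\overline h_K(u)$ is the distance of the supporting hyperplane $H(u,h_K(u))$ from the origin, and every supporting hyperplane of $K$ passes within distance $\|x_0\|$ of $o$; what escapes to infinity as $u\to\partial\Omega_{C^\circ}$ are the \emph{contact points}, not the hyperplanes' distances. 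You have transferred the blow-up behaviour of the radial function $\varrho_K$ to the support function, where it does not occur. Because of this, you discard precisely the paper's (correct and one-line) proof: by Lemma \ref{L3.5},
$$ V_K^\Theta(\Omega_{C^\circ})\le\frac{1}{n-q}\Bigl(\sup_{\Omega_{C^\circ}}\overline h_K\Bigr)\,S_{n-1}^\Theta(K,\Omega_{C^\circ})<\infty, $$
using the finiteness of $S_{n-1}^\Theta(K,\cdot)$ from \cite{Sch24c}, which you yourself invoke.

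The substitute route you sketch --- bounding $\int_{\Omega_C}\Theta(v)\varrho_K^{n-q}(v)\,\D v$ directly --- could in principle be made rigorous, but as written it does not prove the lemma, and it contains a second misconception: $\Theta(v)$ does \emph{not} ``decay fast enough near $\partial C$''. Since $\Theta$ is continuous and positive on $C\setminus\{o\}$, it is bounded above \emph{and below} by positive constants on the compact set $\Sn\cap C$; the homogeneity $\Theta(rv)=r^{-q}\Theta(v)$ gives decay only in the radial direction. So the convergence of the spherical integral must come entirely from controlling $\varrho_K$: from $C+z\subseteq K$ one does \emph{not} get an upper bound on $\varrho_K$ directly useful here --- rather one needs an estimate of the form $\varrho_K(v)\le c/\delta(v)$, where $\delta(v)$ is the angular distance of $v$ to $\partial\Omega_C$ (obtainable, e.g., via Lemma \ref{L5.3} applied to small neighbourhoods, or from the geometry of $C$), after which $\int_{\Omega_C}\delta(v)^{-(n-q)}\,\D v<\infty$ because $n-q<1$ and $\partial\Omega_C$ has codimension one in $\Sn$. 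You explicitly flag exactly this step as ``delicate'' and leave it unverified, so the proposal has a gap where the actual work would be --- a gap that exists only because the boundedness of $\overline h_K$, which closes the argument immediately, was wrongly denied.
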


\begin{proof}
This follows from Lemma \ref{L3.5}, the boundedness of the support functions of $C$-pseudo-cones, and the finiteness of the $\Theta$-weighted surface area measure, which was proved in \cite{Sch24c}.
\end{proof}

\begin{lemma}\label{L3.6}
Let $\varpi\subset\Omega_{C^\circ}$ be a nonempty, compact subset, and let $K_j\in\K(C,\varpi)$ for $j\in\N_0$. Then $K_j\to K_0$ as $j\to\infty$ implies the weak convergence $V_{K_j}^\Theta \stackrel{w}{\to}  V_{K_0}^\Theta$.
\end{lemma}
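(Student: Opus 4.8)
The goal is to prove Lemma~\ref{L3.6}: if $K_j \to K_0$ with all $K_j \in \K(C,\varpi)$, then $V_{K_j}^\Theta \stackrel{w}{\to} V_{K_0}^\Theta$. The plan is to combine the integral representation from Lemma~\ref{L3.5} with the weak convergence of weighted surface area measures already furnished by Lemma~\ref{L3.1w}. By Lemma~\ref{L3.5}, for any continuous test function $f$ on $\Omega_{C^\circ}$ (in fact we only need to test against continuous functions with compact support, or bounded continuous functions, to characterize weak convergence on $\Omega_{C^\circ}$) we have
$$
\int f\,\D V_{K_j}^\Theta = \frac{1}{n-q}\int f\,\overline h_{K_j}\,\D S_{n-1}^\Theta(K_j,\cdot).
$$
So the statement reduces to showing that $\int f\,\overline h_{K_j}\,\D S_{n-1}^\Theta(K_j,\cdot) \to \int f\,\overline h_{K_0}\,\D S_{n-1}^\Theta(K_0,\cdot)$.

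First I would recall that $K_j \to K_0$ forces the supporting data to converge: since all the bodies are $C$-determined by the fixed compact set $\varpi$, the support functions $\overline h_{K_j}$ converge to $\overline h_{K_0}$ uniformly on $\varpi$. This uniform convergence on the common compact index set $\varpi$ is the key structural advantage of working inside $\K(C,\varpi)$, and I would justify it from the standard correspondence between convergence of convex bodies and uniform convergence of support functions on compact sets of directions. Second, Lemma~\ref{L3.1w} gives $S_{n-1}^\Theta(K_j,\cdot) \stackrel{w}{\to} S_{n-1}^\Theta(K_0,\cdot)$ (note the target measure is the weighted one, reading the statement of Lemma~\ref{L3.1w} with its intended weighting), and all these measures are supported in $\varpi$ and uniformly finite. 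The product $f\,\overline h_{K_j}$ is then a sequence of continuous functions on $\varpi$ converging uniformly to the continuous function $f\,\overline h_{K_0}$.

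The main step is therefore the elementary but essential lemma that if a sequence of finite measures $\mu_j$ on the compact set $\varpi$ converges weakly to $\mu_0$, and if $g_j \to g_0$ uniformly on $\varpi$ with $g_0$ continuous, then $\int g_j\,\D\mu_j \to \int g_0\,\D\mu_0$. I would prove this by the triangle-inequality splitting
$$
\left|\int g_j\,\D\mu_j - \int g_0\,\D\mu_0\right| \le \int |g_j - g_0|\,\D\mu_j + \left|\int g_0\,\D\mu_j - \int g_0\,\D\mu_0\right|,
$$
where the first term is bounded by $\|g_j - g_0\|_\infty \cdot \mu_j(\varpi)$, which tends to $0$ because the uniform norm vanishes and the total masses $\mu_j(\varpi)$ are bounded (they converge to $\mu_0(\varpi)$), while the second term tends to $0$ by the weak convergence applied to the single fixed continuous function $g_0$. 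Applying this with $\mu_j = S_{n-1}^\Theta(K_j,\cdot)$ and $g_j = f\,\overline h_{K_j}$ completes the argument.

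The step I expect to require the most care is confirming that convergence $K_j \to K_0$ in the pseudo-cone sense (convergence of the truncations $K_j \cap C^-(t)$) indeed yields uniform convergence of $\overline h_{K_j}$ to $\overline h_{K_0}$ on the fixed compact $\varpi \subset \Omega_{C^\circ}$. Because $\varpi$ stays away from $\partial\Omega_{C^\circ}$, the supporting hyperplanes in directions $u \in \varpi$ touch the bodies within a bounded region $C^-(t_0)$ of $C$ (this is precisely the kind of control provided by Lemma~\ref{L3.1}), so the support values are determined by the truncations that converge, and equicontinuity of the support functions (being restrictions of sublinear functions, they are Lipschitz on $\varpi$ with a uniform constant) upgrades pointwise to uniform convergence. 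Once this is in place, the remaining measure-theoretic assembly is routine.
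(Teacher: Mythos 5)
Your proposal is correct and takes essentially the same route as the paper's own proof: reduce via Lemma~\ref{L3.5} to the convergence of $\int f\,\overline h_{K_j}\,\D S_{n-1}^\Theta(K_j,\cdot)$, then combine the uniform convergence $\overline h_{K_j}\to\overline h_{K_0}$ (the paper cites \cite[Lem.~1.8.14]{Sch14}) with the weak convergence of the weighted surface area measures from Lemma~\ref{L3.1w} (whose stated target measure is indeed a typo for $S_{n-1}^\Theta(K_0,\cdot)$, as you noted). The only difference is that you spell out the elementary step (weak convergence of $\mu_j$ plus uniform convergence of $g_j$, with bounded total masses, gives $\int g_j\,\D\mu_j\to\int g_0\,\D\mu_0$), which the paper leaves implicit.
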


\begin{proof}
Let $f:\Omega_{C^\circ}\to\R$ be a bounded, continuous function. It follows from Lemma \ref{L3.5} that
$$ \int_{\Omega_{C^\circ}} f\,\D V_{K_j}^\Theta =  \frac{1}{n-q} \int_{\Omega_{C^\circ}} f \overline h_{K_j}\,\D S_{n-1}^\Theta(K_j,\cdot).$$
As $j\to\infty$, we have $\overline h_{K_j}\to \overline h_{K_0}$ uniformly (as can be deduced from \cite[Lem. 1.8.14]{Sch14}), and $S_{n-1}^\Theta(K_j,\cdot) \stackrel{w}{\to}  S_{n-1}^\Theta(K_0,\cdot)$ by Lemma \ref{L3.1w}. 
Therefore,
$$ \int_{\Omega_{C^\circ}} f\,\D V_{K_j}^\Theta \to \int_{\Omega_{C^\circ}} f\,\D V_{K_0}^\Theta\quad\mbox{as } j\to \infty,$$
and the assertion follows.
\end{proof}

We shall need the following consequence of this lemma.

\begin{lemma}\label{L3.7}
Let $K_j\in ps(C)$ for $j\in\N_0$, and let $\omega\subset\Omega_{C^\circ}$ be nonempty and compact. Suppose that $t>0$ satisfies $\nu_{K_0}^{-1}(\omega)\subset {\rm int}\,C^-(t)$  and
\begin{equation}\label{3.7} 
\emptyset\not=K_j\cap C^-(t)\to K_0\cap C^-(t)\quad\mbox{as }j\to\infty.
\end{equation}
Then the restriction of $V_{K_j}^\Theta$ to $\omega$ converges weakly to the restriction of $V_{K_0}^\Theta$ to $\omega$.
\end{lemma}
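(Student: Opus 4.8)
The plan is to deduce the assertion from Lemma~\ref{L3.6} by replacing the $K_j$, which need not belong to a common class $\K(C,\varpi)$, by $C$-determined pseudo-cones $L_j$ that agree with $K_j$ on $\omega$ and for which $L_j\to L_0$. Since $\omega$ is compact, it suffices to prove $\int_\omega f\,\D V_{K_j}^\Theta\to\int_\omega f\,\D V_{K_0}^\Theta$ for every continuous $f$ on $\omega$. I would write $A_j:=K_j\cap C^-(t)$, so that by hypothesis $A_j\to A_0$ in the Hausdorff metric and hence $h_{A_j}\to h_{A_0}$ uniformly on $\Sn$.

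First I would fix the index set $\varpi$. Because $\nu_{K_0}^{-1}(\omega)$ is a compact subset of the open set $\inn C^-(t)$, and because on compact subsets of $\Omega_{C^\circ}$ the supporting faces of $K_0$ depend upper semicontinuously on the normal direction, the height $u\mapsto\max\{\langle x,\mathfrak v\rangle: x\in K_0\cap H(u,h_{K_0}(u))\}$ is finite and upper semicontinuous there. Hence I can choose a compact $\varpi$ with $\omega\subset\inn\varpi\subset\varpi\subset\Omega_{C^\circ}$ and $\nu_{K_0}^{-1}(\varpi)\subset\inn C^-(t)$; thus every supporting hyperplane of $K_0$ with normal in $\varpi$ touches $K_0$ strictly inside $C^-(t)$, and in particular $h_{K_0}=h_{A_0}$ on $\varpi$.

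The analytic heart of the reduction is the uniform convergence $h_{K_j}\to h_{K_0}$ on $\varpi$, which I would obtain by estimating the nonnegative difference $h_{K_j}(u)-h_{A_j}(u)$ for $u\in\varpi$. If this difference did not tend to $0$ uniformly, there would be $u_j\in\varpi$ whose supporting hyperplanes touch $K_j$ only outside $C^-(t)$; but $u_j\in\inn C^\circ$ forces $\langle z,u_j\rangle<0$ for every $z\in C\setminus\{o\}$, so the maximizers cannot escape to infinity and stay in a fixed compact set, and a convexity estimate along the segment joining such a maximizer to a point of the limiting face (together with $h_{A_j}(u_j)\to h_{K_0}(u_*)$) forces the difference to $0$. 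With $h_{K_j}\to h_{K_0}$ uniformly on $\varpi$, the Wulff shapes $L_j:=[\overline h_{K_j}|_\varpi]\in\K(C,\varpi)$ converge to $L_0:=[\overline h_{K_0}|_\varpi]$, so Lemma~\ref{L3.6} yields $V_{L_j}^\Theta\stackrel{w}{\to}V_{L_0}^\Theta$.

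The main obstacle is the matching of $V_{K_j}^\Theta$ with $V_{L_j}^\Theta$ on $\omega$ (and likewise for $K_0,L_0$), that is, that the boundary portions carrying normals in $\omega$ coincide for $K_j$ and for its $C$-determined companion $L_j$. Since $K_j\subseteq L_j$ and $h_{L_j}=h_{K_j}$ on $\varpi$, every boundary point of $K_j$ with normal $u\in\omega$ is also one of $L_j$; the reverse inclusion is the delicate part, since discarding the supporting halfspaces with normals outside $\varpi$ could a priori enlarge a face over $\omega$. Here I would use that, by the previous step, for large $j$ these faces lie in $\inn C^-(t)$, together with $\omega\subset\inn\varpi$: a point of $L_j$ on a supporting hyperplane with normal $u\in\omega$ not lying in $K_j$ would have to be separated from $K_j$ by a halfspace whose normal is bounded away from $\omega$, and passing to the limit $j\to\infty$ this configuration is excluded by $\nu_{K_0}^{-1}(\varpi)\subset\inn C^-(t)$. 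Granting the matching, one has $\int_\omega f\,\D V_{K_j}^\Theta=\int_\omega f\,\D V_{L_j}^\Theta$, and the weak convergence of the $V_{L_j}^\Theta$ yields the claim upon restricting to $\omega$; the remaining point needing care is the passage from weak convergence of the full measures to convergence of the restrictions across $\partial\omega$, and it is precisely here that the confinement of the relevant mass to $\inn C^-(t)$ and the uniform convergence on $\varpi$ are used once more.
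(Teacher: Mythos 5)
Your overall route coincides with the paper's: choose a compact $\varpi$ with $\omega\subset\inn\varpi$ and $\nu_{K_0}^{-1}(\varpi)\subset\inn C^-(t)$, replace the $K_j$ by the $C$-determined bodies $L_j=[\overline h_{K_j}|_\varpi]$ (these are exactly the bodies $K_j^{(\varpi)}=C\cap\bigcap_{u\in\varpi}H^-(u,h_{K_j}(u))$ of the paper), prove $L_j\to L_0$, apply Lemma~\ref{L3.6}, and transfer back to the $K_j$ via the identity $\nu_{L_j}^{-1}(\omega)=\nu_{K_j}^{-1}(\omega)$, which is the paper's (\ref{3.8}). Your argument for the convergence $L_j\to L_0$ (maximizers with normals in $\varpi$ stay in a fixed compact set; a segment from a putative touching point outside $C^-(t)$ to the limiting face yields a contradiction) is a sketch of the mechanism behind Lemma 2 of \cite{Sch24c}, which the paper simply cites, and is acceptable in spirit.

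There is, however, a genuine gap in your treatment of the matching step. You need $\nu_{L_j}^{-1}(\omega)=\nu_{K_j}^{-1}(\omega)$ as an identity \emph{for each fixed $j$}, since you use $\int_\omega f\,\D V_{K_j}^\Theta=\int_\omega f\,\D V_{L_j}^\Theta$ for every $j$; but you propose to exclude the bad configuration by ``passing to the limit $j\to\infty$'' and invoking $\nu_{K_0}^{-1}(\varpi)\subset\inn C^-(t)$. A limiting argument cannot establish a per-$j$ set equality (a configuration excluded in the limit may occur for every finite $j$), and in fact the hypothesis involving $t$ is irrelevant here: (\ref{3.8}) is a purely local statement valid for \emph{every} $K\in ps(C)$ and every $\omega\subset\inn\varpi$. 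Moreover, the fact that a point of $L_j\setminus K_j$ is separated from $K_j$ by a halfspace whose normal lies outside $\varpi$ is not by itself contradictory --- such points exist in general; what must be shown is that they never carry a unique outer normal belonging to $\omega$. The correct argument runs as follows: since $K\subseteq K^{(\varpi)}\subseteq H^-(u,h_K(u))$ for $u\in\varpi$, one has $h_{K^{(\varpi)}}=h_K$ on the cone spanned by $\varpi$, which is a neighborhood of the ray through each $u\in\inn\varpi$; since the face of a closed convex set in direction $u$ is the subdifferential of its support function at $u$, and subdifferentials are local, $F(K^{(\varpi)},u)=F(K,u)$ for all $u\in\inn\varpi$. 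So no face over $\omega$ is enlarged, and uniqueness of normals transfers both ways (e.g., if $\nu_{K^{(\varpi)}}(x)=u\in\omega$ is unique, then $x\in F(K,u)\subset K$, and a second normal $w$ of $K$ at $x$ would, by convexity of the normal cone and openness of $\inn\varpi$, yield a normal $v\in\inn\varpi\setminus\{u\}$ with $x\in F(K,v)=F(K^{(\varpi)},v)$, contradicting uniqueness). Finally, the restriction issue across $\partial\omega$ that you flag at the end is real but is not repaired by ``using the confinement once more'': since (\ref{3.8}) holds for all compact subsets of $\inn\varpi$, the measures $V_{K_j}^\Theta$ and $V_{L_j}^\Theta$ agree on all of $\inn\varpi$, so weak convergence holds against continuous test functions supported in $\inn\varpi$ (which is what the application in Section~\ref{sec5} uses, as the restricted measures there are eventually constant); a sharp statement for the restriction to $\omega$ itself would additionally require something like $V_{K_0}^\Theta(\partial\omega)=0$, as atoms of $V_{K_0}^\Theta$ sitting on $\partial\omega$ can be missed by the approximating measures.
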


\begin{proof}
Since $\nu_{K_0}^{-1}(\omega)\subset {\rm int}\,C^-(t)$ and $\omega$ is compact, we can choose a compact set $\varpi\subset\Omega_{C^\circ}$ with $\omega\subset{\rm int}\,\varpi$ and $\nu_{K_0}^{-1}(\varpi)\subset {\rm int}\,C^-(t)$. By the convergence (\ref{3.7}) we have $\nu_{K_j}^{-1}(\varpi)\subset {\rm int}\,C^-(t)$ for sufficiently large $j$, and after changing the notation we can assume that this holds for all $j$.

For $K\in ps(C)$ we define
$$ K^{(\varpi)}:= C\cap \bigcap_{u\in\varpi} H^-(u,h_K(u)),$$
so that $ K^{(\varpi)}\in\K(C,\varpi)$. We have
\begin{equation}\label{3.8}
\nu_{K^{(\varpi)}}^{-1}(\omega)= \nu_K^{-1}(\omega).
\end{equation}
By Lemma 2 of \cite{Sch24c} and its proof, the convergence (\ref{3.7}) implies that 
$$   K_j^{(\varpi)} \to  K_0^{(\varpi)}\quad\mbox{as }j\to\infty.$$
The assertion now follows from (\ref{3.8}) and Lemma \ref{L3.6}.
\end{proof}

\section{Sufficiency of finiteness}\label{sec5}

For $K\in ps(C)$, we define the {\em $\Theta$-weighted covolume} of $K$ by
$$ V_\Theta(K) := V_K^\Theta(\Omega_{C^\circ}) = \int_{C\setminus K} \Theta(y)\,\Ha^n(\D y).$$

To show that finiteness of $\varphi$ is a sufficient condition in Theorem \ref{T1.2}, we  apply first a variational argument, using Wulff shapes. We need Lemma 11 of \cite{Sch24c}, which we reformulate below. We use that $\overline h_K$ for $K\in ps(C)$ is bounded away from $0$ on a compact set $\varpi\subset\Omega_{C^\circ}$.

\begin{lemma}\label{L5.1}
Let $\emptyset\not=\varpi\subset\Omega_{C^\circ}$ be compact and $K\in\K(C,\varpi)$. Let $f:\varpi\to \R$ be continuous, and let $[\overline h_K|_\varpi+tf]$ be the Wulff shape associated with $(C,\varpi,\overline h_K|_\varpi+tf)$ for $|t|\le\varepsilon$, where $\varepsilon>0$ is so small that $\overline h_K|_\varpi+tf>0$. Then
$$ \lim_{t\to 0} \frac{V_\Theta([\overline h_K|_\varpi+tf]) -V_\Theta(K)}{t}= \int_\varpi f(u)\,S_{n-1}^\Theta(K,\D u).$$
\end{lemma}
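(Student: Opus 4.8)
The plan is to reduce the assertion to a differentiation under the integral sign in the radial representation of the weighted covolume. Write $K_t:=[\overline h_K|_\varpi+tf]$ for $|t|\le\varepsilon$. Exactly the polar-coordinate computation carried out in (\ref{3.1x}) (now with the full domain $\Omega_C$ in place of $\alpha_K^{-1}(\omega)$) gives, for every admissible $t$,
$$ V_\Theta(K_t)=\frac{1}{n-q}\int_{\Omega_C}\Theta(v)\,\varrho_{K_t}^{n-q}(v)\,\D v, $$
and the same formula for $t=0$. Subtracting and dividing by $t$, the statement becomes the claim that
$$ \lim_{t\to0}\frac{1}{n-q}\int_{\Omega_C}\Theta(v)\,\frac{\varrho_{K_t}^{n-q}(v)-\varrho_K^{n-q}(v)}{t}\,\D v =\int_\varpi f\,\D S_{n-1}^\Theta(K,\cdot). $$

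First I would determine the radial function of the Wulff shape. For $v\in\Omega_C$ and $u\in\varpi$ one has $\langle v,u\rangle<0$, so $\lambda v\in K_t$ holds (inside $C$) precisely when $\lambda\ge(\overline h_K(u)+tf(u))/|\langle v,u\rangle|$ for every $u\in\varpi$; hence
$$ \varrho_{K_t}(v)=\sup_{u\in\varpi}\frac{\overline h_K(u)+tf(u)}{|\langle v,u\rangle|}. $$
As a supremum of functions affine in $t$, the map $t\mapsto\varrho_{K_t}(v)$ is convex, and at $t=0$ the supremum is attained at the outer normals realizing the radial Gauss map. For the (spherically) almost every $v$ at which $r_K(v)\in\partial{\hspace{1pt}}'K$, so that $\alpha_K(v)$ is the unique maximizer, the envelope principle yields the pointwise derivative
$$ \frac{\D}{\D t}\varrho_{K_t}(v)\Big|_{t=0}=\frac{f(\alpha_K(v))}{|\langle v,\alpha_K(v)\rangle|}, $$
and consequently, by the chain rule together with (\ref{3.3a}),
$$ \frac{\D}{\D t}\varrho_{K_t}^{n-q}(v)\Big|_{t=0}=(n-q)\,\varrho_K^{n-q-1}(v)\,\frac{f(\alpha_K(v))}{|\langle v,\alpha_K(v)\rangle|}. $$

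The hard part will be justifying the interchange of limit and integral, since $\varrho_K$ is unbounded as $v$ approaches $\partial\Omega_C$. Here I would exploit that $\overline h_K$ is bounded and bounded away from $0$ on the compact set $\varpi$ and that $f$ is bounded: for $|t|\le\varepsilon$ there is a constant $c>0$ with $(1-c|t|)\overline h_K\le\overline h_K+tf\le(1+c|t|)\overline h_K$ on $\varpi$, and taking suprema gives $(1-c|t|)\varrho_K(v)\le\varrho_{K_t}(v)\le(1+c|t|)\varrho_K(v)$. The mean value theorem then bounds the difference quotient by $(n-q)\,\xi^{n-q-1}|\varrho_{K_t}-\varrho_K|/|t|$ with $\xi$ between $\varrho_{K_t}(v)$ and $\varrho_K(v)$; since $n-q-1<0$ and $\xi\ge(1-c\varepsilon)\varrho_K(v)$, while $|\varrho_{K_t}-\varrho_K|\le c|t|\varrho_K$, the quotient is dominated by a constant multiple of $\varrho_K^{n-q}(v)$. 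By Lemma \ref{L3.6a} (equivalently by finiteness of $V_\Theta(K)=\tfrac{1}{n-q}\int_{\Omega_C}\Theta\varrho_K^{n-q}\,\D v$), this dominating function is $\Theta$-integrable over $\Omega_C$, so dominated convergence applies.

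Finally I would identify the limit. Combining the above,
$$ \lim_{t\to0}\frac{V_\Theta(K_t)-V_\Theta(K)}{t} =\int_{\alpha_K^{-1}(\varpi)}\Theta(v)\,\varrho_K^{n-q-1}(v)\,\frac{f(\alpha_K(v))}{|\langle v,\alpha_K(v)\rangle|}\,\D v, $$
the integrand vanishing off $\alpha_K^{-1}(\varpi)$ because $\alpha_K(v)\in\varpi$ for almost every relevant $v$. Applying Lemma \ref{L3.3} with $g:=f$ and inserting the homogeneity relation $\Theta(r_K(v))=\varrho_K^{-q}(v)\Theta(v)$ rewrites the right-hand side of Lemma \ref{L3.3} as exactly this integral, so the limit equals $\int_\varpi f\,\D S_{n-1}^\Theta(K,\cdot)$, as claimed.
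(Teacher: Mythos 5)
Your argument is correct and coincides in essence with the actual proof of this statement: the paper itself does not prove Lemma \ref{L5.1} but imports it as Lemma 11 of \cite{Sch24c}, where (following the variational approach of Huang--Lutwak--Yang--Zhang \cite{HLYZ16}) one likewise writes $V_\Theta(K_t)$ in polar coordinates, differentiates $\varrho_{K_t}(v)=\max_{u\in\varpi}\,(\overline h_K(u)+tf(u))/|\langle v,u\rangle|$ at $t=0$ for almost all $v\in\Omega_C$ (the maximizer set being the singleton $\{\alpha_K(v)\}\subset\varpi$ when $r_K(v)\in\partial{\hspace{1pt}}'K$, since for $K\in\K(C,\varpi)$ every boundary point in ${\rm int}\,C$ carries a normal from $\varpi$), interchanges limit and integral by domination, and identifies the result via the analogue of Lemma \ref{L3.3}. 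Incidentally, your domination step can be simplified: since $\varpi$ is compact, $|\langle v,u\rangle|$ is bounded away from $0$ on $(\Sn\cap C)\times\varpi$, so $\varrho_{K_t}$ is uniformly bounded on $\Omega_C$ and dominated convergence is immediate, making your worry about $\varrho_K$ blowing up near $\partial\Omega_C$ moot for $C$-determined pseudo-cones.
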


To complete now the proof of Theorem \ref{T1.2}, we assume that a finite Borel measure $\varphi$ on $\Omega_{C^\circ}$ is given. If $\varphi$ is the zero measure, then $V_K^\Theta=\varphi$ can be satisfied if, for example, we choose $K= C+z$ with $z\in {\rm int}\,C$. Therefore, we assume in the following that $\varphi$ is not the zero measure.

The proof that $\varphi$ can be realized as $V_K^\Theta$ for some pseudo-cone $K\in ps(C)$ proceeds in two steps. The first step assumes that $\varphi$ has compact support $\omega$ and finds the solution $K\in\K(C,\omega)$ by solving an extremum problem and applying the variational Lemma \ref{L5.1}. The second step applies this to an increasing sequence $(\omega_j)_{j\in \N}$ of compact sets with union $\Omega_{C^\circ}$ and employs the Blaschke selection theorem and the choice of a diagonal sequence. 

The first step can be found in \cite[Sect. 9]{Sch18} for unweighted surface area measures and in \cite[Sect. 7]{Sch24c} for $\Theta$-weighted surface area measures. 
For unweighted cone-volume measures, both steps were carried out in \cite[Sects. 11, 12]{Sch18}, and the second step for unweighted surface area measures appears in \cite[Sect. 3]{Sch21}. Therefore, we need not reproduce the full proof here, but we give only a sketch and point out where different arguments are required. A major difference to \cite{Sch18} is the use of Lemma \ref{L5.1}. Aleksandrov's original variational lemma, after which it is modeled, used in its proof (which is reproduced in \cite[Sect. 7.5]{Sch14}) inequalities for mixed volumes, thus restricting its generalizability. Fortunately, Huang, Lutwak, Yang and Zhang \cite[Lem. 4.3]{HLYZ16} found a different approach, which allowed several extensions of the lemma.

We assume, first, that $\varphi$ is a nonzero finite Borel measure with support contained in the compact set $\varpi\subset\Omega_{C^\circ}$. Temporarily we assume that $\varphi(\varpi)=1$. The functional to be maximized is defined by
$$ \Phi(f):= V_\Theta([f])^{-1/(n-q)} \exp\int_\varpi \log f\,\D\varphi,\quad f\in\C^+(\varpi),$$
where $\C^+(\varpi)$ is the set of positive continuous functions on $\varpi$. This functional is continuous and homogeneous of degree zero, as follows from Lemma 5 in \cite{Sch18},  the properties of the function $\Theta$ and from $\varphi(\varpi)=1$. As in the proof of Theorem 4 in \cite{Sch18}, one shows that $\Phi$ attains a maximum on $\{\overline h_L|_\varpi:L\in \K(C,\varpi),\,V_\Theta(L)=1\}$ at $\overline h_{K_0}|_\varpi$ for some set $K_0\in\K(C,\varpi)$ with $V_\Theta(K_0)=1$, and that this is also the maximum of $\Phi$ on $\C^+(\varpi)$.

Let $f$ be a continuous function on $\varpi$. The function
$$ h_t:= \overline h_{K_0}|_\varpi +tf\overline h_{K_0}|_\varpi\quad\mbox{for } |t|\le\varepsilon$$
belongs to $\C^+(\varpi)$ if $\varepsilon>0$ is sufficiently small. Hence, the derivative of $\Phi(h_t)$ at $t=0$ (which exists by Lemma \ref{L5.1}) is equal to zero, which together with Lemma \ref{L5.1} and $V_\Theta([h_0])= V_\Theta(K_0)=1$ leads to 
$$ \left[-\frac{1}{n-q} \int_\varpi f\overline h_{K_0}\,\D S_{n-1}^\Theta(K_0,\cdot) + \int_\varpi f\,\D\varphi\right]\exp\int_\varpi \log \overline h_{K_0}\,\D\varphi=0.$$
In view of Lemma \ref{L3.5} this is equivalent to
$$ \int_\varpi f\,\D V_{K_0}^\Theta = \int_\varpi f\,\D \varphi.$$
Since this holds for all continuous functions $f$ on $\varpi$, we deduce that $V_{K_0}^\Theta= \varphi$.

If $\varphi(\varpi)=1$ is not satisfied, then a suitable dilate $K$ of $K_0$ satisfies $V_K^\Theta=\varphi$, since $V_K^\Theta$ is homogeneous in $K$ of degree $n-q\not= 0$.

For the second step, we assume now that $\varphi$ is a nonzero, finite Borel measure on $\Omega_{C^\circ}$. We choose a sequence $(\omega_j)_{j\in\N}$ of compact subsets of $\Omega_{C^\circ}$ with $\varphi(\omega_1)>0$, $\omega_j\subset{\rm int}\,\omega_{j+1}$ for $j\in\N$ and $\bigcup_{j\in\N} \omega_j=\Omega_{C^\circ}$. Then we define $\varphi_j(\omega):= \varphi(\omega\cap\omega_j)$ for $\omega\in\B(\Omega_{C^\circ})$ and $j\in \N$. As shown above, for each $j\in\N$ there exists $K_j\in\K(C,\omega_j)$ with $V_{K_j}^\Theta=\varphi_j$.

The proof can be completed as that of Theorem 5 in \cite{Sch18}, once we have shown that there are two constants $c_1,c_2>0$, independent of $j$, such that the distance $b(K)$ of $K$ from the origin satisfies 
\begin{equation}\label{5.1}
c_1< b(K_j) < c_2\quad\mbox{for all }j\in\N.
\end{equation}

For the left inequality of (\ref{5.1}), we use the following lemma. 

\begin{lemma}\label{L5.2}
Let $K\in ps(C)$ satisfy $b(K)=1$. Let $\omega\in\B(\Omega_{C^\circ})$ be a Borel set with $\Delta(\omega):=\tau>0$. Then there is a constant $c$, depending only on $C,\Theta,\tau$, such that $V_K^\Theta(\omega)<c$.
\end{lemma}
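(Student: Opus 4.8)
The plan is to bound $V_K^\Theta(\omega)$ by the $\Theta$-weighted volume of a single truncated cone $C^-(t)$, whose height $t$ I control uniformly in $K$ by Lemma~\ref{L3.1}, following the scheme of the proof of Theorem~\ref{T1.1n}. The two new ingredients needed beyond that proof are a uniform choice of the scale parameter (coming from $b(K)=1$) and the explicit behaviour of the weighted volume of $C^-(t)$. As a first step I would replace $\omega$ by the set $\omega_\tau:=\{u\in\Omega_{C^\circ}:\Delta(\{u\})\ge\tau\}$. Since $\Delta(\omega)=\tau$ we have $\omega\subseteq\omega_\tau$, and $\omega_\tau$ is a compact subset of $\Omega_{C^\circ}$ with $\Delta(\omega_\tau)\ge\tau$; by monotonicity of the measure $V_K^\Theta(\omega)\le V_K^\Theta(\omega_\tau)$, so it suffices to bound the latter.

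Next I would introduce $s:=\min\{t\in\R:K\cap H(\mathfrak v,t)\neq\emptyset\}$, as in the proof of Theorem~\ref{T1.1n}. The hypothesis $b(K)=1$ gives the uniform bound $s\le 1$: there is a point $x_0\in K$ with $\|x_0\|=1$, and then $s\le\langle x_0,\mathfrak v\rangle\le\|x_0\|\,\|\mathfrak v\|=1$. Applying Lemma~\ref{L3.1} with this $s$ to the compact set $\omega_\tau$ produces a number $t_{\omega_\tau}\le c_1 s/\Delta(\omega_\tau)\le c_1/\tau$, where $c_1=c_1(C,\mathfrak v)$. Exactly as in the proof of Theorem~\ref{T1.1n}, each $x\in\nu_K^{-1}(\omega_\tau)$ lies on a supporting hyperplane $H(u,h_K(u))$ of $K$ with $u\in\omega_\tau$, this hyperplane meets $C^-(s)$, so by Lemma~\ref{L3.1} it intersects $C$ inside $C^-(t_{\omega_\tau})$; in particular $x\in C^-(t_{\omega_\tau})$. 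Since $C^-(t_{\omega_\tau})$ is convex and contains $o$, the set $M(K,\omega_\tau)$ from (\ref{3.b}) satisfies $M(K,\omega_\tau)\subset C^-(t_{\omega_\tau})$.

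It then remains to estimate $V_K^\Theta(\omega_\tau)=\Ha_\Theta^n(M(K,\omega_\tau))\le\Ha_\Theta^n(C^-(t_{\omega_\tau}))$. Here the key computation, done in polar coordinates as in (\ref{3.1x}), uses the dilation relation $C^-(t)=t\,C^-(1)$ together with the $(-q)$-homogeneity of $\Theta$ to give $\Ha_\Theta^n(C^-(t))=t^{\,n-q}\Ha_\Theta^n(C^-(1))$. The finiteness of $\Ha_\Theta^n(C^-(1))$ holds because $q<n$ makes the radial integrand $r^{n-1-q}$ integrable at the apex, and because $\langle v,\mathfrak v\rangle$ is bounded below by a positive constant on the compact set $\Sn\cap C$, so that the angular factor $\int_{\Sn\cap C}\Theta(v)\langle v,\mathfrak v\rangle^{-(n-q)}\,\D v$ converges; the resulting constant depends only on $C$, $\Theta$, $\mathfrak v$. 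Combining the estimates yields $V_K^\Theta(\omega)\le\Ha_\Theta^n(C^-(1))\,(c_1/\tau)^{\,n-q}=:c$, which depends only on $C$, $\Theta$, $\tau$, since $\mathfrak v$ is fixed by $C$.

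I expect the main obstacle to be the clean verification of the finiteness and the scaling of $\Ha_\Theta^n(C^-(t))$, that is, controlling the $(-q)$-homogeneous weight near the apex of the cone; once this is in place, the rest is a uniform-in-$K$ rerun of the truncation argument already established for Theorem~\ref{T1.1n}, with the uniform bound $s\le1$ supplied by the normalization $b(K)=1$.
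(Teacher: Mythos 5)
Your proof is correct and takes essentially the same approach as the paper: both exploit $b(K)=1$ together with $\Delta(\omega)\ge\tau$ to trap $\nu_K^{-1}(\omega)$, and hence $M(K,\omega)$, in a fixed bounded subset of $C$ depending only on $C,\mathfrak v,\tau$, and then bound $V_K^\Theta(\omega)$ by the $\Theta$-weighted volume of that set, which is finite precisely because $q<n$ makes $\Theta$ integrable at the apex. The only difference is cosmetic: the paper truncates by a ball $RB^n$ (the constant $R$ coming from the same compactness fact that underlies Lemma \ref{L5.3}) and leaves $\Ha_\Theta^n(RB^n\cap C)<\infty$ implicit, whereas you truncate by the slab $C^-(c_1/\tau)$ via Lemma \ref{L3.1} (with the correct uniform bound $s\le 1$ from $b(K)=1$) and verify the finiteness and $(n-q)$-homogeneity of $\Ha_\Theta^n(C^-(t))$ explicitly.
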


\begin{proof}
Let $x\in\nu_K^{-1}(\omega)$. There is a supporting hyperplane $H$ of $K$ at $x$ with $u\in\omega$. Since $b(K)=1$, the hyperplane $H$ meets the ball $B^n$. Therefore, there is a constant $R$, depending only on $C$ and $\tau$, such that $H\cap C\subset RB^n$. Thus, $\nu_K^{-1}(\omega)\subset RB^n$. This implies that 
$$V_K^\Theta(\omega)=\Ha_\Theta^n(M(K,\omega)) < \Ha_\Theta^n(RB^n)=:c,$$
where $c<\infty$ depends only on $C,\tau,\Theta$.
\end{proof}

Let $j\in\N$. We have $V_K^\Theta(\omega_1) =\varphi(\omega_1)>0$. Since $b(b(K_j)^{-1}K_j)=1$, we get, with a constant $c>0$ depending only on $C,\Theta,\omega_1$,
$$ c> V_{b(K_j)^{-1}K_j}^\Theta(\omega_1) = b(K_j)^{-(n-q)}V_{K_j}^\Theta(\omega_1)$$
and hence
$$ b(K_j) >\left(\frac{\varphi(\omega_1)}{c}\right)^{\frac{1}{n-q}},$$
where the right-hand side is independent of $j$.

\vspace{2mm}

\noindent{\bf Remark.} Lemma \ref{L5.2} is modeled after Lemma 9 in \cite{Sch24a}. The latter lemma can also be applied in the proof of Theorem 5 in \cite{Sch18} and in the proof of Theorem 1 in \cite{Sch21}, where it is not mentioned. However, the application in the second part of the proof of Theorem 1 in \cite{Sch24c} is a mistake, since there $S_{n-1}^\Theta$ is homogeneous of negative degree in its first argument.

\vspace{2mm}

To show the right-hand inequality of (\ref{5.1}), we note that the measure $\Ha_\Theta^n$, defined by (\ref{1.0}), is homogeneous of degree $n-q>0$, hence we can choose a number $c_2>0$ with $\Ha_\Theta^n(c_2B^n)>\varphi(\Omega_{C^\circ})$. Suppose that $b(K_j)\ge c_2$ for some $j$. Then $C\cap c_2B^n\subset C\setminus K_j$, hence we obtain
$$ \varphi(\omega_j) = V_{K_j}^\Theta(\omega_j) = V_{K_j}^\Theta(\Omega_{C^\circ})= \Ha_\Theta^n(C\setminus K_j)>\Ha_\Theta^n(c_2B^n) >\varphi(\omega_j).$$
This is a contradiction. 

The following is Lemma 7 from \cite{Sch24a} (slightly reformulated).

\begin{lemma}\label{L5.3}
If $K\in ps(C)$ and $\omega\in\B(\Omega_{C^\circ})$, then
$$ \nu_K^{-1}(\omega) \subset \frac{b(K)}{\sin\Delta(\omega)}B^n.$$
\end{lemma}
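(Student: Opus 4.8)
The plan is to reduce the set inclusion to a pointwise bound on each $x\in\nu_K^{-1}(\omega)$: I will control $\|x\|$ by the distance of the supporting hyperplane at $x$ from the origin (which never exceeds $b(K)$), divided by a sine factor that records how deep the normal $u$ sits inside $C^\circ$.

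First I would fix $x\in\nu_K^{-1}(\omega)$. By definition there is a unit vector $u\in\omega\subset\Omega_{C^\circ}$ with $H(u,h_K(u))$ supporting $K$ at $x$, so $\langle x,u\rangle=h_K(u)=-\overline h_K(u)$; since $u\in{\rm int}\,C^\circ$ and $o\notin K\subset C$, this value is negative. Writing $x=\|x\|\hat x$ with $\hat x\in C\cap\Sn$ (note $x\neq o$ and $x\in C$) gives
\[
\|x\|=\frac{\overline h_K(u)}{-\langle\hat x,u\rangle},
\]
so it suffices to bound the numerator from above and the denominator from below. For the numerator, the halfspace $H^-(u,h_K(u))$ contains $K$ and, because $h_K(u)<0$, has $o$ strictly on the opposite side at distance $\overline h_K(u)$; hence every point of $K$ is at distance at least $\overline h_K(u)$ from $o$, i.e. $\overline h_K(u)\le b(K)$ (equivalently $\overline h_K(u)=\inf_{y\in K}\langle y,-u\rangle\le\inf_{y\in K}\|y\|=b(K)$).

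The decisive step, and the one I expect to be the main obstacle, is the lower bound $-\langle\hat x,u\rangle\ge\sin\Delta(\omega)$, since this is where the hypothesis on $\Delta(\omega)$ enters through the duality between $C$ and $C^\circ$. Let $\delta$ be the spherical distance from $u$ to $\partial\Omega_{C^\circ}=\Sn\cap\partial C^\circ$, so $\delta\ge\Delta(\omega)$ and the closed geodesic ball of radius $\delta$ about $u$ lies in $C^\circ$; as $C^\circ$ is pointed we have $\delta<\pi/2$. Given the unit vector $\hat x\in C$, one has $\angle(\hat x,u)\ge\pi/2>\delta$ because $u\in C^\circ$. I would then rotate $u$ towards $\hat x$ within the plane they span, through the angle $\delta$, obtaining a unit vector $v$ with $\angle(v,u)=\delta$, hence $v\in C^\circ$, and $\angle(v,\hat x)=\angle(u,\hat x)-\delta$. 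Since $\hat x\in C$ and $v\in C^\circ$ force $\langle\hat x,v\rangle\le 0$, this yields $\angle(u,\hat x)-\delta\ge\pi/2$, so $\angle(\hat x,u)\ge\pi/2+\delta$ and therefore $-\langle\hat x,u\rangle=-\cos\angle(\hat x,u)\ge\sin\delta\ge\sin\Delta(\omega)$. The degenerate configurations are harmless: $\hat x=u$ is impossible, and $\hat x=-u$ makes the inequality trivial.

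Combining the two estimates gives $\|x\|\le b(K)/\sin\Delta(\omega)$ for every $x\in\nu_K^{-1}(\omega)$, which is exactly the asserted inclusion. The only points that require care are verifying the strict negativity of $h_K(u)$ (ensuring $u$ is genuinely interior to $C^\circ$, which underlies both the cap-in-cone statement and the separation of $o$ from $K$), and phrasing the rotation argument in arbitrary dimension entirely within the two-dimensional plane $\aff\{o,u,\hat x\}$, where the elementary planar trigonometry is transparent.
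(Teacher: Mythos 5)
Your proof is correct and complete, and the degenerate cases ($\hat x=-u$, strict negativity of $h_K(u)$ for $u\in{\rm int}\,C^\circ$) are handled properly. Note that the paper gives no argument of its own here---it simply cites Lemma 7 of \cite{Sch24a}---and your two estimates, namely $\overline h_K(u)\le b(K)$ via the supporting halfspace separating $o$ from $K$, and $-\langle\hat x,u\rangle\ge\sin\Delta(\omega)$ via the closed geodesic ball of radius $\delta$ about $u$ lying in $C^\circ$ and forcing $\angle(\hat x,u)\ge\pi/2+\delta$, constitute exactly the standard route by which the cited lemma is established.
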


To complete now the proof of Theorem \ref{T1.2}, we combine some previous arguments. Precisely as in the proof of Theorem 5 in \cite{Sch18}, one shows that there are a pseudo-cone $K\in ps(C)$ and an increasing sequence $(t_k)_{k\in\N}$ with $t_k\uparrow\infty$ such that, after a change of notation, the pseudo-cones $K_j\in \K(C,\omega_j)$ satisfy $V_{K_j}^\Theta= \varphi_j$ and 
$$ \emptyset\not= K_j\cap C^-(t_k) \to K\cap C^-(t_k) \quad\mbox{as }j\to\infty,\,\mbox{for each }k\in\N.$$

We fix a number $i\in \N$. By Lemma \ref{L5.3} and the boundedness of $(b_{K_j})_{j\in\N}$ there is a number $m\in \N$ such that
$$ \nu_{K_j}^{-1}(\omega_i)\subset C^-(t_m)\quad\mbox{for } j\ge i.$$
For $j\ge i$, the restrictions to $\omega_i$ (denoted by $\cdot\fed\omega_i$) satisfy
$$ \varphi\fed \omega_i= \varphi_j\fed\omega_i = V_{K_j}^\Theta\fed\omega_i =V_{K_j\cap C^-(t_m)}^\Theta\fed\omega_i.$$
From Lemma \ref{L3.7} and $K_j\cap C^-(t_m)\to K\cap C^-(t_m)$ we get
$$ V_{K_j\cap C^-(t_m)}^\Theta\fed \omega_i \to V_{K\cap C^-(t_m)}^\Theta\fed\omega_i\quad\mbox{weakly}$$
and therefore
$$ \varphi\fed\omega_i = V_{K\cap C^-{(t_m)}}^\Theta\fed\omega_i = V_K^\Theta\fed\omega_i.$$
Since this holds for all $i\in\N$ and $\bigcup_{i\in\N} \omega_i=\Omega_{C^\circ}$, we obtain $\varphi= V_K^\Theta$, which completes the proof.

\noindent Author's address:\\[2mm]
Rolf Schneider\\Mathematisches Institut, Albert--Ludwigs-Universit{\"a}t\\D-79104 Freiburg i.~Br., Germany\\E-mail: rolf.schneider@math.uni-freiburg.de

\end{document}